\newcommand{\ignore}[1]{}
\renewcommand{\Re}{\operatorname{Re}}
\renewcommand{\Im}{\operatorname{Im}}
\newcommand{\abs}[1]{\left\lvert {#1} \right\rvert}
\newcommand{\sabs}[1]{\lvert {#1} \rvert}
\newcommand{\norm}[1]{\left\lVert {#1} \right\rVert}
\newcommand{\snorm}[1]{\lVert {#1} \rVert}
\newcommand{\C}{{\mathbb{C}}}
\newcommand{\R}{{\mathbb{R}}}
\newtheorem{thm}{Theorem}[section]
\newtheorem{prop}[thm]{Proposition}
\newtheorem{lemma}[thm]{Lemma}
\newtheorem{claim}[thm]{Claim}
\theoremstyle{definition}
\newtheorem{example}[thm]{Example}
\theoremstyle{remark}
\author{Ji\v{r}\'{\i} Lebl}
\thanks{The first author was in part supported by NSF grant DMS-1362337.}
\address{Department of Mathematics, Oklahoma State University,
Stillwater, OK 74078, USA}
\email{lebl@math.okstate.edu}
\author{Alan Noell}
\address{Department of Mathematics, Oklahoma State University,
Stillwater, OK 74078, USA}
\email{noell@math.okstate.edu}
\author{Sivaguru Ravisankar}
\address{School of Mathematics, Tata Institute of Fundamental Research,
Mumbai 400005, India}
\email{sivaguru@math.tifr.res.in}
\date{June 1, 2017}
\title{On Lewy extension for smooth hypersurfaces in $\C^n \times \R$}
\keywords{Extension of CR functions, Lewy extension, CR singularity, Levi-flat}
\subjclass[2010]{32V40 (Primary), 32V25 (Secondary)}
\begin{document}


\begin{abstract}
We prove an analogue of the Lewy extension theorem for 
a real dimension $2n$ smooth submanifold
$M \subset \C^{n}\times \R$, $n \geq 2$.
A theorem of Hill and Taiani implies that
if $M$ is CR and the Levi-form has a positive
eigenvalue restricted
to the leaves of $\C^n \times \R$, then every smooth CR function $f$
extends
smoothly as a CR function to one side of $M$.  If the Levi-form
has eigenvalues of both signs,
then $f$ extends to a neighborhood of $M$.
Our main result concerns CR singular manifolds
with a
nondegenerate quadratic part $Q$.
A smooth CR $f$ extends to one side
if the Hermitian part of $Q$
has at least two positive eigenvalues, and $f$ extends to the other side if
the form has at least two negative eigenvalues.
We provide examples to show that at least two nonzero
eigenvalues in the direction of the extension are needed.
\end{abstract}

\maketitle




\section{Introduction} \label{section:intro}

Let $M \subset \C^n \times \R$ be a smooth real hypersurface.
A function $F \colon \C^n \times \R \to \C$ is a CR function if it is holomorphic
in the first $n$ variables.  A natural question is: When does a smooth
function $f \colon M \to \C$ extend
to a smooth CR function on a
neighborhood of $M$ in $\C^n \times \R$, or at least to one side of $M$.
The question is the classical CR extension, but with a real
parameter.

Let $(z,s) = (z_1,\dots,z_n,s)$ be the coordinates on $\C^n \times \R$.
Define the standard CR structure on $M$, the Cauchy-Riemann
equations restricted to $M$, as if it were embedded in
$\C^{n+1}$:  At each point $p \in M$ define
\begin{equation}
T_p^{(0,1)} M = \C \otimes T_p M
\,\cap\,
\operatorname{span}_{\C} \left\{
\frac{\partial}{\partial \bar{z}_1}
,\dots,
\frac{\partial}{\partial \bar{z}_n}
\right\} .
\end{equation}
Generically, we expect $\dim T_p^{(0,1)}M = n-1$, but
$\dim T_p^{(0,1)}M = n$ is possible.  If 
$\dim T_p^{(0,1)}M = n$ for all $p$,
then $M$ is a complex submanifold by the Newlander-Nirenberg theorem,
and so it is locally equal to $\C^n \times \{ s_0 \}$ for some $s_0$,
and $f$ extends to both sides of $M$ if and only if $f$ is holomorphic on $M$.
Thus, assume
that at least somewhere $\dim T_p^{(0,1)}M = n-1$.
The dimension $\dim T_p^{(0,1)}M$ is called the \emph{CR dimension} of $M$ at $p$.

The points where $\dim T_p^{(0,1)}M = n-1$ are called CR points of $M$,
and the points where $\dim T_p^{(0,1)}M = n$ are the so-called CR singularities.
Write $M_{CR}$ for the set of CR points of $M$.
We need a definition of a CR function on a possibly CR singular submanifold,
and we take the definition in the weakest possible sense:
\emph{A function $f \colon M \to \C$ is CR, if $v f = 0$ for all CR vector fields
on $M_{CR}$}.
Alternatively, we obtain the same definition if we allow $v$ to be smooth
vector fields on $M$ such that $v_p \in T^{(0,1)}_p M$ for all $p$. 
Such vector fields in general vanish at the CR singularities.
There are other possible
definitions of a CR function on a CR singular manifold,
but they imply the definition above.
The condition that $f$ be CR is clearly a necessary condition for $f$
to extend to even one side of $M$ in $\C^n \times \R$.
We prove that under certain nondegeneracy conditions
it is sufficient.

Notice $M \subset \C^n \times \R \subset \C^{n+1}$.
When $M$ and $f$ are real-analytic and both CR,
the function $f$
extends holomorphically to a neighborhood of $M$ in $\C^{n+1}$ as a
holomorphic function, via the classical theorem of Severi,
and hence real-analytically as a CR function of a
neighborhood of $M$ in $\C^n \times \R$.
If $M$ and $f$ are only
smooth and CR, such an extension need not hold.
For $M \subset \C^{n+1}$ not of infinite type, then $f$ would
extend into wedges, see for example Tumanov~\cite{Tumanov}.
But the $M$ we are considering is of infinite type, and $f$
does not in general extend into any open subset of $\C^{n+1}$.
The general Lewy extension
in the CR case was solved by Hill and Taiani~\cite{HillTaiani}: 
A CR function extends to a higher dimensional
submanifold extending in a direction of nonzero eigenvalues of the
Levi-form.

For every fixed $s$, and any set $X \subset \C^n \times \R$, define
\begin{equation}
(X)_{\{s\}} :=
 \{ z \in \C^n \mid (z,s) \in X \} .
\end{equation}
For a fixed $s$,
if $n \geq 2$, and the manifold $(M)_{\{s\}} \subset \C^{n} \times \{ s \}$ is not Levi-flat (the Levi-form has at
least one nonzero eigenvalue), then we obtain a local
extension of $f$ to at least one side of $(M)_{\{s\}}$ in
$\C^{n} \times \{ s \}$.  The key is to tie these extensions together.
In our setup, the result of Hill and Taiani~\cite{HillTaiani} 
implies the following corollary.  We state the theorem formally so
that the reader can compare the CR result with our CR singular
result below, Theorem~\ref{thm:extCRsing}, which is the
main result of the paper.

\begin{thm}[Special case of Hill-Taiani] \label{thm:extCR}
Let $M \subset \C^n \times \R$, $n \geq 2$, be a real smooth CR submanifold of
real dimension $2n$ (a hypersurface) and of CR dimension $n-1$.
Let $p = (z_0,s_0) \in M$ be a point.
Then there exists a neighborhood $U \subset \C^n \times \R$ of $p$,
such that given a smooth CR function $f \colon M \to \C$, we have:
\begin{enumerate}[(i)]
\item
If at $z_0$, $(M)_{\{s_0\}}$ is a hypersurface whose Levi-form has at least one positive
eigenvalue, then
the side of $M$ in $U$ corresponding to the positive
eigenvalue of the Levi-form of $(M)_{\{s_0\}}$
is a submanifold with boundary $H_+$, where
$\partial H_+ = M \cap U$,
and there exists a smooth function $F \colon H_+ \to \C$
that is CR in $H_+ \setminus M$ and
$f|_{M\cap U} = F|_{M\cap U}$.
\item
If at $z_0$, $(M)_{\{s_0\}}$ is a hypersurface whose Levi-form has
eigenvalues of both signs, then
there exists a
smooth CR function $F \colon U \to \C$, such that $f|_{M\cap U} =
F|_{M\cap U}$.
\end{enumerate}
\end{thm}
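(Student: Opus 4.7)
The plan is to reduce both parts to the Hill--Taiani extension theorem \cite{HillTaiani} by embedding $\C^n \times \R \hookrightarrow \C^{n+1}$, taking $\R$ as the real axis of the last complex coordinate $w = s + it$. Then $M$ becomes a smooth generic CR submanifold of $\C^{n+1}$ of real dimension $2n$, real codimension $2$, and CR dimension $n-1$. The first task is to check that the paper's CR structure on $M$ coincides with the CR structure induced from $\C^{n+1}$. Since $\partial/\partial \bar w$ has a nonzero component along $\partial/\partial t$, which is normal to $\C^n \times \R$, no nonzero multiple of $\partial/\partial \bar w$ lies in $\C \otimes T_p M$, so the induced CR structure is spanned by those combinations of $\partial/\partial \bar z_i$ that are tangent to $M$. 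In particular, a function smooth CR on $M$ in the paper's sense is CR on $M \subset \C^{n+1}$ in the standard sense.

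Next I would relate the Levi forms. Let $\rho$ be a smooth defining function of $M$ near $p$ in $\C^n \times \R$, regarded as independent of $t$ on $\C^{n+1}$. Then $M = \{\rho = 0,\ t = 0\}$ is cut out in $\C^{n+1}$ by two functions, giving a basis $\nu_1 \propto d\rho|_p$ and $\nu_2 = dt|_p$ of the conormal space. The vector-valued Levi form of $M \subset \C^{n+1}$ at $p$, projected onto $\nu_2$, is computed from $\partial \bar\partial t = 0$ and so vanishes identically. Projected onto $\nu_1$, it is computed from $\partial \bar\partial \rho$ restricted to $T^{(1,0)}_p M$, which is precisely the Levi form of $(M)_{\{s_0\}}$ at $z_0$ under the natural identification $T^{(1,0)}_p M \cong T^{(1,0)}_{z_0} (M)_{\{s_0\}}$. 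Hence the eigenvalues of the slice Levi form are realized as eigenvalues of the Levi form of $M \subset \C^{n+1}$ in the real direction $\nu_1 \in T_p(\C^n \times \R)$.

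The conclusions then follow from Hill--Taiani. For (i), a positive eigenvalue of the slice Levi form translates to a positive eigenvalue of the $\C^{n+1}$-Levi form of $M$ in direction $+\nu_1$; Hill--Taiani then produces a smooth CR extension to a $(2n+1)$-dimensional submanifold with boundary $M$, emanating from $M$ in the $\nu_1$ direction. Because $\nu_1$ lies in $\C^n \times \R$, this submanifold is precisely the required open side $H_+ \subset \C^n \times \R$. For (ii), eigenvalues of both signs produce positive eigenvalues in both $\pm \nu_1$, and Hill--Taiani yields smooth CR extensions $F_+$ and $F_-$ on the two sides of $M$ in $\C^n \times \R$, each agreeing with $f$ on $M$. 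The main subtlety I expect is showing that the piecewise function obtained by gluing $F_\pm$ and $f$ is genuinely smooth and CR across $M$; this is handled by an edge-of-the-wedge argument, or equivalently by the standard fact that two-sided CR extensions across a hypersurface with a positive Levi eigenvalue have boundary traces from either side agreeing to infinite order, so the glued function is the desired smooth CR function on $U$.
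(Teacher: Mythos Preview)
Your approach is correct and is precisely the route the paper alludes to in the introduction: embed $\C^n\times\R$ in $\C^{n+1}$, verify that the induced CR structure on $M$ agrees with the paper's definition, compute that the $\C^{n+1}$--Levi form of $M$ has all its nonzero components in the conormal direction $\nu_1$ tangent to $\C^n\times\R$, and then invoke Hill--Taiani with the ambient CR manifold $\widetilde M=\C^n\times\R$. One small imprecision: Hill--Taiani does not ``construct'' the extension manifold; rather, you must \emph{choose} $\widetilde M$ and verify that the Levi form projected on the normal to $M$ inside $\widetilde M$ has a positive eigenvalue. Once you say explicitly that $\widetilde M=\C^n\times\R$, your Levi-form computation does exactly this, and $H_+$ is the correct side.

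The paper, however, deliberately gives a different, self-contained proof. Instead of quoting Hill--Taiani, it works leafwise: for each fixed $s$ it applies the classical Lewy extension on the hypersurface $(M)_{\{s\}}\subset\C^n$ via analytic discs to obtain $F(\cdot,s)$ on $(H_+)_{\{s\}}$, observes that the discs vary smoothly in $s$ to get $F\in C^\infty(H_+\setminus M)$, and then proves regularity up to $M$ by showing that the tangential operators $X$ and $Y=\partial_s-(\rho_s/\rho_{\bar z_1})\partial_{\bar z_1}$ recover $F_{z_k}|_M$ and $F_s|_M$ as smooth CR functions of $f$, bootstrapping to $F\in C^\infty(H_+)$. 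For part~(ii) the paper simply says it ``immediately follows'' from (i), i.e.\ by the same two-sided gluing you describe. Your route is shorter and perfectly adequate as a proof of the theorem; the paper's route is chosen because the leafwise disc construction and the boundary-regularity bootstrap are exactly the tools reused later at CR singular points (Lemmas~\ref{lemma:extW}, \ref{lemma:extallbutzero}, and the Claims in \S\ref{section:regularityCRsing}), where no black-box theorem is available.
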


We provide a separate sketch of a proof in our notation
as some of the ideas are reused in
the CR singular case, and to make the paper more self contained.
The proof follows by modification
of the proof from \cite{crext}, where a similar, but global, theorem
was proved for $n \geq 1$; the case $n=1$ requires an extra hypothesis.

Let us move to CR singularities.
A real codimension two submanifold $M \subset \C^{n+1}$ has generically
isolated CR singularities.  Such manifolds were first studied in $\C^2$ by E.~Bishop~\cite{Bishop65}.
Especially in the elliptic case (when a family of discs attaches to $M$),
the work of Bishop was extended by Moser-Webster~\cite{MoserWebster83},
Moser~\cite{Moser85},
Kenig-Webster
\cites{KenigWebster:82}, Gong~\cite{Gong94:duke},
Huang-Krantz~\cite{HuangKrantz95}, 
Huang-Yin~\cite{HuangYin09}, Slapar~\cite{Slapar:16}, and many others.

For $n > 1$, the work mostly addressed normal form,
see Huang-Yin~\cites{HuangYin09:codim2,HuangYin:flattening1,HuangYin:flattening2},
Gong-Lebl~\cite{GongLebl}, Coffman~\cites{Coffman}, Burcea~\cites{Burcea,Burcea2}.
In particular, it is not always possible to change variables to realize the
manifold
as a submanifold of $\C^n \times \R$, a so-called \emph{flattening},
see Dolbeault-Tomassini-Zaitsev~\cites{DTZ,DTZ2},
Huang-Yin~\cites{DTZ,HuangYin:flattening1,HuangYin:flattening2}, and
Huang-Fang~\cites{HuangFang}.
If $M \subset \C^{n+1}$ is not \emph{flattenable},
then even in the real-analytic case, an extension of CR functions
does not in general exist near CR singularities. See Harris~\cite{Harris} and
Lebl-Minor-Shroff-Son-Zhang~\cite{LMSSZ}.

The authors considered the extension of CR functions for hypersurfaces
of $\C^n \times \R$
in the elliptic case~\cite{crext}, and in the general nondegenerate
real-analytic case~\cite{crext2}.  In this paper we address what 
happens in the nondegenerate smooth case.
Via the results of~\cite{crext2}, a formal extension
always exists, but a smooth extension does not exist in all cases.

Locally, after a complex affine change of variables
fixing $\C^n \times \R$, a smooth CR singular $M \subset \C^{n} \times \R$
is given by
\begin{equation} \label{eq:basicnorm}
\begin{split}
M: \quad s & = A(z,\bar{z}) + B(z,z)+\overline{B(z,z)} + E(z,\bar{z})
       = Q(z,\bar{z}) + E(z,\bar{z}) ,
\end{split}
\end{equation}
where $A$ is a Hermitian (sesquilinear) form, $B$ is a bilinear form,
$Q$ is the real quadratic form given by $A$ and $B$,
and $E$ is a smooth real-valued function in $O(3)$.
We require that
$Q$ is nondegenerate, that is,
$Q$ is represented by a nonsingular symmetric $2n \times 2n$ matrix.

Let
\begin{equation} \label{eq:quadmodel}
M^{quad}: \quad s = A(z,\bar{z}) + B(z,z)+\overline{B(z,z)} = Q(z,\bar{z})
\end{equation}
be the quadric model of $M$.
The manifold $M^{quad}$ (and therefore $M$)
has an isolated CR singularity because $Q$ is nondegenerate
(see Proposition~\ref{prop:isolgen}).
Suppose $A$ has $a$ positive and $b$ negative eigenvalues.
Diagonalize $A$ and write
\begin{equation} \label{eq:basicnormdiagA}
M: \quad s = \sum_{j=1}^a \sabs{z_j}^2 - \sum_{j=a+1}^{a+b} \sabs{z_j}^2 + B(z,z)
+ \overline{B(z,z)}  + E(z,\bar{z}),
\end{equation}
where $a=0$ or $b=0$ is interpreted appropriately.
Unless $A$ is positive definite we cannot always simultaneously diagonalize $B$.
If the number $a$ is normalized to $a \geq b$, then it is an invariant.
To simplify the statement of the theorem,
we do not make this requirement.

Define the manifold with boundary
\begin{equation} \label{eq:basicnormH}
H_+: \quad s \geq \sum_{j=1}^a \sabs{z_j}^2 - \sum_{j=a+1}^{a+b} \sabs{z_j}^2 + B(z,z)+\overline{B(z,z)} + E(z,\bar{z}) .
\end{equation}
The form $A$ carries the Levi form of the model manifold $(M^{quad})_{\{s\}}$, that is, $A$
restricted to $T_p^{(0,1)} (M^{quad})_{\{s\}}$ is the Levi-form of $(M^{quad})_{\{s\}}$.
Thus, a small perturbation of $A$ gives the
Levi-form of $(M)_{\{s\}}$.
If $a \geq 2$, then the Levi-form of $(M)_{\{s\}}$
has at least one positive eigenvalue in the direction of $H_+$.
Therefore
it is natural to expect an extension of $f$ to $H_+$, if $A$ has at least
two positive eigenvalues.
We now state our main result.

\begin{thm} \label{thm:extCRsing}
Suppose $M$ and $H_+$ are defined near the origin
by \eqref{eq:basicnormdiagA} and \eqref{eq:basicnormH}, $n \geq 2$,
and $Q$ is nondegenerate.
Then there exists a neighborhood $U$ of the origin, such that
given a smooth CR function $f \colon M \to \C$, we have:
\begin{enumerate}[(i)]
\item
If $A$ has at least two positive eigenvalues ($a \geq 2$),
then there exists
a function $F \in C^\infty(H_+ \cap U)$ such that
$F$ is CR on $(H_+ \setminus M) \cap U$ and $F|_{M \cap U} = f|_{M \cap U}$.
\item
If $A$ has at least two positive eigenvalues ($a \geq 2$)
and at least two negative eigenvalues ($b \geq 2$),
then there exists a CR function $F \in C^\infty(U)$ such that
$F|_{M \cap U} = f|_{M \cap U}$.
\end{enumerate}

In either case, $F$ has a formal power series in $z$ and $s$ at 0.
\end{thm}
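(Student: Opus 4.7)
The plan is a parametric (in $s$) application of Theorem~\ref{thm:extCR} on the slices $(M)_{\{s\}}$, combined with the formal extension of~\cite{crext2} to control the CR singular point at the origin.

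\textbf{Slice geometry.} For $s$ in a small punctured neighborhood of $0$, the slice $(M)_{\{s\}}$ is a smooth real hypersurface in $\C^n$ not containing $0$. Since $B(z,z) + \overline{B(z,z)}$ is pluriharmonic, the complex Hessian of the defining function $Q + E - s$ is $A + O(|z|)$, so the Levi form of $(M)_{\{s\}}$ at any nearby point is a small perturbation of the restriction of $A$ to a complex hyperplane. The interlacing inequality for Hermitian forms then guarantees at least $a - 1 \geq 1$ positive eigenvalues, and in case (ii) also at least $b - 1 \geq 1$ negative ones; the positive direction is exactly into the side carved out by $H_+$, matching the global geometry.

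\textbf{Slice-by-slice extension.} I would apply the proof of Theorem~\ref{thm:extCR} parametrically in $s$, producing a function $\tilde F$ smooth on $(H_+ \setminus \{0\}) \cap U$, CR on the interior of that set, and equal to $f$ on $M \setminus \{0\}$; in case (ii), $\tilde F$ is smooth on $U \setminus \{0\}$ and CR on $U \setminus M$. The only subtlety away from $0$ is maintaining joint smoothness in $s$: since the defining function and the Levi form vary smoothly in $s$, the analytic-disc or integral-kernel construction underlying Theorem~\ref{thm:extCR} goes through in families with no essential change.

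\textbf{Extension across the origin.} The main obstacle is to show that $\tilde F$ extends smoothly across the CR singular point $0$. Here I would use the formal extension theorem from~\cite{crext2}: because $Q$ is nondegenerate, there exists a unique formal power series $\hat F \in \C[[z, s]]$ with $\partial_{\bar z_j} \hat F = 0$ formally and $\hat F|_M = f$ as formal Taylor series at $0$. The heart of the argument is to show that every derivative $\partial^\alpha \tilde F(z,s)$ has a limit as $(z, s) \to 0$, and that the resulting Taylor polynomial at $0$ equals that of $\hat F$. This would be carried out by differentiating the slice extension formula $\alpha$ times in $(z, s)$ and bounding the resulting integrals using the fact that $(M)_{\{s\}}$ collapses toward $0$ at a controlled rate (like $|s|^{1/2}$ in the ``Bishop'' directions). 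Uniqueness of $\hat F$ then forces the asymptotic expansion of $\tilde F$ at $0$ to coincide with $\hat F$, and setting $F := \tilde F$ yields the desired extension. The final assertion about the formal power series at $0$ is immediate.
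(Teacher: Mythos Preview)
Your overall architecture matches the paper's, but two of the three steps contain genuine gaps that the paper has to work hard to fill.

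\textbf{Filling the interior of $H_+$.} You assert that the parametric Lewy extension produces $\tilde F$ smooth on all of $(H_+\setminus\{0\})\cap U$. But Theorem~\ref{thm:extCR} is local: it only extends $f$ to a one-sided neighborhood of each CR point of $M$, i.e.\ to some tube $W\cap H_+$ around $M\setminus\{0\}$ (this is Lemma~\ref{lemma:extW} in the paper). Points deep in the interior of $(H_+)_{\{s\}}$, for instance $(0,s)$ with $s>0$, are not reached. The paper spends \S\ref{section:affinedisks}--\S\ref{section:extnnearsing} on this: it constructs explicit affine discs through every interior point attached to CR points of $M$ (Lemma~\ref{lemma:affinedisks}), analyzes the topology of the leaves $(H_+)_{\{s\}}$ to show the analytic continuation is single-valued (Propositions~\ref{prop:formgeom} and~\ref{prop:geometryofleaves}), and for $s>0$ uses a separate Hartogs-type argument along the level sets of $\lambda_1\zeta_1^2+\lambda_2\zeta_2^2$ (Lemma~\ref{lemma:extanuli}) to reach the remaining points. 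None of this is automatic from a slice-by-slice application of Theorem~\ref{thm:extCR}.

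\textbf{Smoothness at the origin.} Your plan is to differentiate ``the slice extension formula'' and bound the integrals using the $|s|^{1/2}$ collapse rate. This is where the proposal is most seriously incomplete: as $(z,s)\to 0$ the attached discs shrink, and derivatives of the Cauchy kernel pick up negative powers of the disc radius; it is not at all clear that these are compensated by the decay of $f$ minus its Taylor polynomial. The paper uses a completely different mechanism (\S\ref{section:regularityCRsing}). Continuity of $F$ at $0$ comes from the maximum principle on the affine discs. For higher regularity, one shows that $F_s|_M$ and $F_{z_j}|_M$ extend smoothly through the origin by writing $f_{\bar z_j}=F_s|_M\cdot\rho_{\bar z_j}$ and proving divisibility of $f_{\bar z_j}$ by $\rho_{\bar z_j}$ via the Malgrange--Mather division theorem together with the formal extension (this is where the formal series from~\cite{crext2} actually enters). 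Since $F_s|_M$ and $F_{z_j}|_M$ are then smooth CR functions on $M$, one iterates the whole construction to climb in regularity. Your proposed direct estimate does not obviously work, and in any case you would need to explain how the formal series controls the integral remainders; the paper's division-theorem route is the key idea you are missing.
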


By changing $s$ to $-s$, we change the roles of $a$ and $b$.
So if $A$ has two negative eigenvalues, then $f$ extends to
the opposite side of $M$ (below $M$).  In particular, the first item
immediately implies the second.

Some nondegeneracy is necessary.  As in
\cite{crext}, $M \subset \C^n \times \R$ given by $s = \snorm{z}^4$, and
$f \colon M \to \C$ given by $\snorm{z}^2 = \sqrt{s}$ is a
counterexample.  For this $M$, $(M)_{\{s\}}$ is strictly pseudoconvex
for $s > 0$, and
a continuous extension $F$ exists, but the extension is not differentiable
at the origin.

The conditions on the number of eigenvalues are necessary.
In 
\S\ref{section:prelimsexamples}, we show that if $M$ is given by
$s = \sabs{z_1}^2-\sabs{z_2}^2$, then there exists a smooth CR function that
extends to neither side.  In this case, both $a=1$ and $b=1$.  Furthermore,
if $M$ is given by
$s = \sabs{z_1}^2+\sabs{z_2}^2 - \sabs{z_3}^2$, then there exists
a smooth CR function that
extends only to $H_+$ and not the to opposite side.
In \S\ref{section:furtherexamples}, we also provide an example where $M$ is defined by
$s = \sabs{z_1}^2+\sabs{z_2}^2$, and there is a smooth CR
function on $M$ that extends to one side only near every point.

It is not clear if the isolated CR
singularity of $M$ and $M^{quad}$ is required
for the result, but our technique breaks down in several key steps
without this requirement, notably in the application of
Malgrange's theorem.  The results of \cite{crext2} show
that a formal extension exists even when the CR
singularity is not isolated (as long as $A$ is nondegenerate).

We mostly discuss $n \geq 2$.  When $n=1$, the CR condition is
vacuous, and every smooth function is a CR function.
An extra condition is necessary for extension, and under an extra
condition the authors proved an extension in the elliptic case
in~\cite{crext}.

A natural question is whether a version of the well-known Baouendi-Tr\`eves approximation theorem holds in the present setting.  Example~\ref{example:oneone} shows that there is no straightforward analogue.

Let us outline the organization of the paper.  We prove some
preliminaries and give several examples
in \S\ref{section:prelimsexamples} to justify the hypotheses of the theorems.
In \S\ref{section:extCR} we sketch
the proof of the extension at CR points.
We construct certain affine analytic discs in \S\ref{section:affinedisks} that are useful to study the extension to $H_+\setminus M$.
\S\ref{section:extnnearsing} is devoted to showing the existence and regularity of the extension in $H_+\setminus M$ near the CR singularity.
The affine discs from \S\ref{section:affinedisks} are used to show apriori interior regularity of the extension and to show the existence of the extension for $s\le 0$.
Additionally we also use natural complex manifolds with boundary attached to $M$ to show existence of the extension for $s > 0$.
In \S\ref{section:formal} we use the results of \cite{crext2} to show that a
formal extension exists at CR singularities.
We prove that the extension is regular up to the boundary
in \S\ref{section:regularityCRsing}.
Finally, in \S\ref{section:furtherexamples} we provide further relevant examples.


\section{Preliminaries and examples} \label{section:prelimsexamples}

The sesquilinear quadratic form $A(z,\bar{z})$ is represented by
a Hermitian matrix, which we also
call $A$, as 
$\bar{z}^t A z$, thinking of $z$ as a column vector.
Similarly, the bilinear form  $B(z,z)$ is represented by a matrix $B$ as 
$z^t B z$.  The matrix $B$ is not
unique, but it can be made unique by requiring that $B$
is symmetric, or
upper (or lower) triangular.  Let us suppose that it is symmetric.

A linear transformation $T$ in the $z$ variables transforms
the matrices $A$ and $B$
as $T^*AT$ (where $*$ is the conjugate transpose) and $T^tBT$.
Via Sylvester's law of inertia, we change $A$
into a diagonal matrix with only $1$s, $0$s, and $-1$s
on the diagonal.
Both matrices cannot in general be diagonalized
by the same $T$.  However, it is a standard result in linear
algebra, that if $A$ is positive
definite, then the symmetric $B$ can be diagonalized,
such that the diagonal elements
are nonnegative numbers.  See e.g.\ \cite{HornJohnson}*{Theorem 7.6.5}.
In other words, if $A$ is positive definite, then $M^{quad}$
can be put into the form
\begin{equation} \label{eq:hermposQ}
M^{quad}: \quad s = \snorm{z}^2 + \sum_{j=1}^n \lambda_j (z_j^2+\bar{z}_j^2) =
\sum_{j=1}^n \left( \sabs{z_j}^2 + \lambda_j
(z_j^2+\bar{z}_j^2) \right) .
\end{equation}

We start by showing that having an isolated CR singularity of $M^{quad}$ is
a generic condition.  When $A$ is positive definite, it is
not difficult to see this by direct computation.
In this case, $M^{quad}$ has an isolated CR singularity if and only if
$\lambda_j \not= \frac{1}{2}$ for all $j$, what is normally called
``nonparabolic''.  The general case is similar.

\begin{prop} \label{prop:isolgen}
Let a quadric model $M^{quad}$ be given by
\begin{equation}
s = \sum_{j=1}^a \sabs{z_j}^2 - \sum_{j=a+1}^{a+b} \sabs{z_j}^2 + B(z,z)
+ \overline{B(z,z)} = Q(z,\bar{z})
\end{equation}
for a bilinear form $B$.
$M^{quad}$ has an isolated CR singularity if and only if the real quadratic form $Q(z,\bar{z})$
is nondegenerate.
In particular, the set of $B$ for which $M^{quad}$
has an isolated CR singularity is an open dense set.
It is the
complement of a proper
real-algebraic subvariety of the set of all symmetric $n \times n$
matrices giving $B(z,z)$.

If $M$ is a submanifold with quadric model $M^{quad}$ such that $Q$
is nondegenerate, then $M$ also has an isolated CR singularity
at the origin.
\end{prop}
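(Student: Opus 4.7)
The plan is to identify the CR singular locus of $M^{quad}$ with the critical set of the real-valued function $Q\colon \C^n = \R^{2n} \to \R$, and from there to deduce all three assertions. For the hypersurface defined by $\rho := s - Q = 0$ in $\C^n \times \R$, a direct computation from the definition of $T_p^{(0,1)}M$ shows that a point $(z_0, Q(z_0,\bar z_0))$ is CR singular iff $\partial Q/\partial \bar z_j(z_0) = 0$ for all $j$. Since $Q$ is real-valued, $\partial Q/\partial z_j = \overline{\partial Q/\partial \bar z_j}$, so this vanishing is equivalent to $\nabla Q(z_0) = 0$. Hence the CR singular set of $M^{quad}$ is exactly the critical set of the quadratic form $Q$.

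Now, the critical set of a real quadratic form is the kernel of its symmetric matrix, which is $\{0\}$ iff the form is nondegenerate. This gives the equivalence between isolated CR singularity of $M^{quad}$ and nondegeneracy of $Q$. For the density statement, note that the determinant of the symmetric matrix of $Q$ is a real polynomial in the real and imaginary parts of the entries of $B$ (with $A$ fixed), so its nonvanishing locus is either empty or is the complement of a proper real-algebraic subvariety, automatically open and dense. To rule out the empty case, I will exhibit one $B$ making $Q$ nondegenerate. Using $A = A^*$, decompose $\C^n = \Ran A \oplus \Nul A$ and take $B$ block-diagonal, zero on $\Ran A$ and the identity on $\Nul A$. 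Then $Q$ splits as the direct sum of the Hermitian form $A|_{\Ran A}$, which is nondegenerate by construction, and the real form $w \mapsto w^t w + \bar w^t \bar w = 2\sum_j (x_j^2 - y_j^2)$ on $\Nul A$, which is diagonal with nonzero eigenvalues and hence nondegenerate.

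For the last assertion, the CR singular set of $M = \{s = Q + E\}$ is the zero locus of the smooth map
\[
\psi(z) := \frac{\partial (Q+E)}{\partial \bar z}(z,\bar z) \colon \C^n \to \C^n .
\]
Because $E \in O(3)$, we have $\psi(z) = A z + 2\bar B \bar z + O(\sabs{z}^2)$, so its real differential at the origin is the $\R$-linear map $(z,\bar z) \mapsto A z + 2\bar B \bar z$, which by the equivalence above is invertible as a real linear map precisely when $Q$ is nondegenerate. The inverse function theorem then provides a neighborhood of the origin on which $\psi^{-1}(0) = \{0\}$, i.e., the origin is an isolated CR singularity of $M$. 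The main content of the argument is the identification in the first paragraph of the CR singularity condition for $M^{quad}$ with $\nabla Q = 0$; once this reality trick is in place, the rest reduces to standard linear algebra together with the inverse function theorem.
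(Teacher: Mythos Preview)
Your proof is correct and follows essentially the same approach as the paper: identify the CR singular locus with the vanishing of $\nabla Q$, reduce to nondegeneracy of the real symmetric matrix of $Q$, and for general $M$ observe that the higher-order perturbation $E$ cannot create new zeros near the origin. You supply more detail than the paper does---in particular the explicit construction of a $B$ making $Q$ nondegenerate (the paper just says the conclusion ``follows at once'') and the clean use of the inverse function theorem for the final part---but the underlying argument is the same.
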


\begin{proof}
Suppose $s = Q(z,\bar{z})$ is the defining equation for $M^{quad}$.
The real quadratic form $Q$ is given by a
real $2n \times 2n$ symmetric matrix.
The set of CR singularities is the set
where the $z$-plane is tangent to $M^{quad}$, therefore, where
the derivative of $Q$ vanishes.  The derivative of $Q$ vanishes outside the origin
if and
only if the underlying real matrix is not of full rank.  The conclusion about
$B$ follows at once.  The final conclusion about $M$ also follows at once;
the $z$-plane $M$ cannot be tangent to $M$
unless the all the $z$ derivatives of $Q(z,\bar{z}) + E(z,\bar{z})$ vanish,
and this is not possible in some punctured neighborhood of the
origin if $Q$ is nondegenerate.
\end{proof}

\begin{example}
It is possible for a degenerate $M$ to have an isolated CR singularity.
The introduction provides the example $s = \snorm{z}^4$, which is degenerate
in the sense that both $A$ and $Q$ are degenerate,
but the CR singularity is the origin alone.

On the other hand, if $A$ is nondegenerate, it is still possible for $Q$ to be degenerate, and
for the CR singularity to be large.  For example for the ``completely
parabolic'' quadric manifold (all $\lambda_j = \frac{1}{2}$) the CR singularity is
a totally real $n$-real-dimensional linear submanifold.
\end{example}

Let us show via examples that the conditions of
Theorems~\ref{thm:extCR} and \ref{thm:extCRsing} are 
necessary.

\begin{example}
\emph{In the CR case, at least one nonzero
eigenvalue is necessary.}

Let $(z,s) \in \C \times \R$, and $M$ be given by $\Im z = 0$.
Define a smooth CR function $f \colon M \to \C$ by
\begin{equation}
f(z,s) =
\begin{cases}
\frac{e^{-1/s^2}}{z+is} & \text{if $s \not= 0$}, \\
0 & \text{if $s = 0$} .
\end{cases}
\end{equation}
Any extension must satisfy the same formula near $M$,
simply plugging in complex $z$,
being the unique extension along $(M)_{\{s\}}$.
It is clear that $f$ extends to neither side of $M$ in $\C \times \R$.
In this example $n=1$, where clearly Hill-Taiani does not apply because
there is no Levi-form.  It is immediate that when $n > 1$
and the Levi-form is zero (Levi-flat), the extension does not occur.

Similarly, let $M \subset \C^2 \times \R$ and $f$ be given by
\begin{equation}
\Im z_1 = s \abs{z_2}^2 ,
\qquad
f(z,s) =
\begin{cases}
\frac{e^{-1/s^2}}{z_1+is} & \text{if $s \not= 0$}, \\
0 & \text{if $s = 0$} .
\end{cases}
\end{equation}
We obtain
an example where $M$ is not Levi-flat,
the Levi-form is only zero when $s=0$, and the extension
to neither side is possible near the origin.
\end{example}

\begin{example}\label{example:oneone}
\emph{In the CR singular case, at least two eigenvalues of the
same sign are necessary to guarantee an extension to some side.}

Let $M \subset \C^2 \times \R$ be given by
\begin{equation}
s = \abs{z_1}^2 - \abs{z_2}^2 .
\end{equation}
Let $f \colon M \to \C$ be given by
\begin{equation}
f(z,s) =
\begin{cases}
\frac{1}{z_1} e^{-1/s^2} & \text{if $s > 0$,} \\
0 & \text{if $s = 0$,} \\
\frac{1}{z_2} e^{-1/s^2} & \text{if $s < 0$.}
\end{cases}
\end{equation}
Let us show that $f$ is smooth.  If $s > 0$, then clearly
$z_1$ is not zero on $M$.  Similarly if $s < 0$, then $z_2$
is not zero on $M$.  Therefore, $f$ is smooth when $s \not=0$.
By symmetry it is enough to show that $f$ is smooth if $s \geq 0$ as we
approach $s = 0$ from above.
Suppose $s > 0$.  Rewrite $f$ in terms of $z$ and $\bar{z}$.
\begin{equation}
f(z,\abs{z_1}^2-\abs{z_2}^2)
=
\frac{1}{z_1} e^{-1/{(\abs{z_1}^2-\abs{z_2}^2)}^2}
\end{equation}
The derivatives of $f$ are going to be of the form
\begin{equation}
\frac{P(z,\bar{z})}{z_1^d{(\abs{z_1}^2-\abs{z_2}^2)}^k}
\,
e^{-1/{(\abs{z_1}^2-\abs{z_2}^2)}^2}
\end{equation}
for a polynomial $P$.
As $s > 0$, and so
$\abs{z_1}^2 > \abs{z_2}^2$, we bound
\begin{equation}
\abs{\frac{1}{z_1}}
\leq
\frac{1}{\abs{z_1}^2-\abs{z_2}^2} .
\end{equation}
So all derivatives are  bounded on the set where $s > 0$, as $s$ approaches
0.  Therefore $f$ is smooth.
$f$ is also CR as we wrote it as a function of $z$
and $s$ and not $\bar{z}$.

On the other hand $f$ cannot extend holomorphically into the set where
$s > \abs{z_1}^2-\abs{z_2}^2$, because there is a pole arbitrarily
close to the origin (where $z_1 = 0$).
Similarly $f$ cannot extend holomorphically into the set
$s < \abs{z_1}^2-\abs{z_2}^2$.  Hence there is no neighborhood of the
origin and no CR function (of any regularity) on either side of $M$ in
$\C^2 \times \R$ inside this neighborhood that extends $f$.

This example shows that a straightforward analogue of the Baouendi-Tr\`eves approximation theorem does not hold.  In particular, the above CR function $f$ cannot be a uniform limit of polynomials in $z$ and $s$ in some compact neighborhood $K \subset \subset M$ of the origin.  Every point in $\C^2 \times \R$ lies on an analytic disc attached to $M$; to see this, simply fix $z_2$ and $s$, and the intersection of the resulting line with $M$ will give such a disc.  Therefore, if $f$ were such a limit, then it would extend to a neighborhood of the origin in $\C^2 \times \R$.
\end{example}

\begin{example}
\emph{In the CR singular case, we need at least two eigenvalues of both signs
to guarantee extension to both sides.}

Let $M \subset \C^3 \times \R$ be given by
\begin{equation}
s = \abs{z_1}^2 + \abs{z_2}^2 - \abs{z_3}^2 .
\end{equation}
Let $f \colon M \to \C$ be given by
\begin{equation}
f(z,s) =
\begin{cases}
0 & \text{if $s \geq 0$,} \\
\frac{1}{z_3} e^{-1/s^2} & \text{if $s < 0$.}
\end{cases}
\end{equation}
Let us show that $f$ is smooth.  If $s < 0$, then clearly
$z_3$ is not zero on $M$.  Therefore, $f$ is smooth when $s \not=0$.
The computation is similar as in the previous example.
Suppose $s < 0$.  Rewrite $f$ in terms of $z$ and $\bar{z}$.
\begin{equation}
f(z,\abs{z_1}^2+\abs{z_2}^2-\abs{z_3}^2)
=
\frac{1}{z_3} e^{-1/{(\abs{z_1}^2+\abs{z_2}^2-\abs{z_3}^2)}^2}
\end{equation}
The derivatives of $f$ are going to be of the form
\begin{equation}
\frac{P(z,\bar{z})}{z_3^d{(\abs{z_1}^2+\abs{z_2}^2-\abs{z_3}^2)}^k}
\,
e^{-1/{(\abs{z_1}^2+\abs{z_2}^2-\abs{z_2}^2)}^2}
\end{equation}
for a polynomial $P$.
As $s < 0$, then
$\abs{z_1}^2+\abs{z_2}^2 > \abs{z_3}^2$.  We bound
\begin{equation}
\abs{\frac{1}{z_3}}
\leq
\frac{1}{\abs{z_3}^2-(\abs{z_1}^2+\abs{z_2}^2)} .
\end{equation}
So all derivatives are  bounded on the set where $s < 0$, as $s$ approaches
0.  Therefore $f$ is smooth.
Again $f$ is CR as we wrote it as a function of $z$
and $s$, and not $\bar{z}$.

Similarly as for the previous example, the function cannot extend to the
set $s < \abs{z_1}^2+\abs{z_2}^2-\abs{z_3}^2$, because there is a pole arbitrarily
close to the origin (where $z_3 = 0$).  By Theorem~\ref{thm:extCRsing}, it does extend
to the other side smoothly.
\end{example}

\begin{example}
\emph{Interior regularity may fail for a degenerate submanifold with a large CR singularity.}

Write $z = (z',z'')$, and define $M$ by
\begin{equation}
s={\bigl(\norm{z'}^2-\norm{z''}^2\bigr)}^3 .
\end{equation}
Let $f \colon M \to \C$ be given by $f(z,s) = \sqrt[3]{s}$.  This function is CR, smooth on $M$, in fact real-analytic, and extends as a continuous CR function to a neighborhood of the origin.  But the extension is not differentiable when $s=0$.
\end{example}


\section{Extension at CR points} \label{section:extCR}

In this section we provide a sketch of the proof of the first part of
Theorem \ref{thm:extCR}, from which the second part immediately follows.
Suppose $p=(z_0,s_0)\in M$ is such that the Levi-form of the hypersurface
$(M)_{\{s_0\}}\subset\C^n$ has at least one positive eigenvalue at $p$.
There is a neighborhood $U$ of $p$, where the Levi-form of $(M)_{\{s\}}$ has
at least one positive eigenvalue at $(z,s)\in U$ corresponding to the same
side $H_+$.

Let $I = \{s\in \R \mid (z,s)\in H_+\cap U \text{ for some } z\in \C^n\}$.
$U$ can be made small enough so that
for each $s\in I$, every point of $\left(U \cap H_+\setminus M\right)_{\{s\}}$ is
contained in an analytic disk whose boundary lies on $M$.
Using the Cauchy integral formula on these analytic disks, a smooth CR function
$f\colon M\cap U\to\C$ extends to a function
$F\colon H_+\cap U\to\C$ that is holomorphic and smooth up to
the boundary on each leaf $(H_+)_{\{s\}}$.
This is Lewy's extension theorem.
Furthermore, the analytic disks above can be chosen to vary smoothly 
with respect to $s$, showing that
$F\in C^{\infty}\bigl((H_+\setminus M)\cap U\bigr)$.

It only remains to be shown that $F$ is smooth up to $M$ in $H_+\cap U$.
We do this by an approach that is similar to but simpler than that
employed in Lemma 4.2 from \cite{crext}.
We present an outline below that incorporates the simplifications and
the necessary modifications.

\begin{claim} $F$ is continuous $H_+\cap U$.
\end{claim}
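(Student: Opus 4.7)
The plan is to reduce to showing continuity at an arbitrary boundary point $p_0 = (z_0, s_0) \in M \cap U$, since continuity on $(H_+ \setminus M) \cap U$ has already been established.  The strategy is to represent $F(z,s)$, for $(z,s)$ close to $p_0$ on the $H_+$ side, as a Cauchy-type average of $f$ over a small curve lying on $M$ near $p_0$, and then to conclude from uniform continuity of $f$ on a compact neighborhood of $p_0$ in $M$.

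First I would construct a smoothly parameterized family of small analytic disks $\varphi_{s,\tau}\colon \overline{\bD} \to \C^n$, with $(s,\tau)$ ranging over a neighborhood of $(s_0,\tau_0)$ in $\R \times \R^N$, such that $\varphi_{s,\tau}(\partial\bD) \subset (M)_{\{s\}}$, $\varphi_{s,\tau}(\bD) \subset \bigl((H_+)_{\{s\}}\bigr)^{\!\circ}$, $\varphi_{s_0,\tau_0} \equiv z_0$, and such that the evaluation $(s,\tau) \mapsto (\varphi_{s,\tau}(0), s)$ sweeps out a full one-sided neighborhood of $p_0$ in $H_+$.  In each fixed slice this is Lewy's classical construction using the positive Levi eigenvalue of $(M)_{\{s\}}$ at $z_0$; since the Bishop integral equation producing these disks has coefficients depending smoothly on the defining function of $M$, and hence on $s$, the family can be arranged to be smooth in the joint parameter $(s,\tau)$.

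The second step is the estimate.  For each $(z,s)$ in this neighborhood, choose $(s,\tau)$ with $\varphi_{s,\tau}(0) = z$; the Cauchy integral formula on the disk reads
\begin{equation*}
F(z,s) \;=\; \frac{1}{2\pi i}\int_{\partial\bD} \frac{f(\varphi_{s,\tau}(\zeta), s)}{\zeta}\, d\zeta ,
\end{equation*}
and subtracting the analogous identity applied to the constant $f(p_0)$ gives
\begin{equation*}
\bigl|F(z,s) - f(p_0)\bigr| \;\le\; \sup_{\zeta\in\partial\bD}\bigl|f(\varphi_{s,\tau}(\zeta), s) - f(p_0)\bigr| .
\end{equation*}
As $(z,s) \to p_0$, continuous dependence of $\tau$ on $(z,s)$ forces $\tau \to \tau_0$ and $s \to s_0$, whence $\varphi_{s,\tau}(\partial\bD) \to \{z_0\}$ uniformly in $M$, so the right-hand side tends to $0$ by uniform continuity of $f$.

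The main obstacle is the construction in the first step: producing a disk family with smooth joint $(s,\tau)$-dependence in which every point $(z,s)$ of a one-sided neighborhood of $p_0$ is realized at the \emph{center} of some disk, or at least at a point uniformly bounded away from $\partial\bD$, so that the Cauchy kernel stays bounded in the estimate above.  This is the technical heart of Lewy's single-slice argument; the new content here is that smoothness of the Bishop equation in the real parameter $s$ upgrades the classical slicewise continuity up to $M$ into joint continuity of $F$ on $H_+ \cap U$.
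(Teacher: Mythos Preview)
Your approach is correct, but the route differs from the paper's in one structural way.  The paper first writes
\[
|F(z_0,s_0)-F(z,s)| \le |F(z_0,s_0)-F(z,s_0)| + |F(z,s_0)-F(z,s)|
\]
and disposes of the second term by invoking the already-established interior regularity $F \in C^\infty\bigl((H_+\setminus M)\cap U\bigr)$.  Only the first term, a purely in-slice estimate at the \emph{fixed} level $s_0$, then remains; a single analytic disk through $z$ in the leaf $(H_+)_{\{s_0\}}$, attached to $M$ near $(z_0,s_0)$, together with the maximum modulus principle applied to $f - f(z_0,s_0)$ finishes it.  In particular, the paper never needs the attached disks to vary with $s$ for this claim.

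You instead place the disk in the slice $\{s\}$ of the approaching point and let $s$ move, which forces you to build a Bishop family smooth in the joint parameter $(s,\tau)$ whose centers fill a one-sided neighborhood of $p_0$.  This is heavier machinery for the same conclusion, though it has the minor advantage of not appealing to the interior smoothness of $F$.  One simplification worth noting: the Cauchy integral representation is not needed here, since the maximum modulus principle already gives
\[
|F(z,s)-f(p_0)| \le \sup_{\zeta\in\partial\bD}\bigl|f(\varphi_{s,\tau}(\zeta),s)-f(p_0)\bigr|
\]
at \emph{any} interior point of the disk, not just the center.  Dropping the Cauchy formula therefore removes what you identified as the main obstacle, the requirement that the centers sweep out a full neighborhood; it suffices that each nearby $(z,s)$ lie somewhere on a small attached disk.
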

\begin{proof}
It suffices to check for continuity up to $M$ in $H_+\cap U$.
For $(z_0,s_0)\in M\cap U$ and $(z,s)\in \left(H_+\setminus M\right)\cap U$,
\begin{equation}
\abs{F(z_0,s_0)-F(z,s)} \le \abs{F(z_0,s_0)-F(z,s_0)} + \abs{F(z,s_0)-F(z,s)}.
\end{equation}
Since $F\in C^{\infty}\left(\left(H_+\setminus M\right)\cap U\right)$,
it is enough to show that the first term can be made small.
For $z$ close to $z_0$ there is an analytic disk $\Delta$ passing through
$z$ and attached to $M\cap V$ for a neighborhood $V\subset U$ of $(z_0,s_0)$.
Considering the smooth CR function $f-f(z_0,s_0)$
and using the maximum modulus principle, we have
\begin{equation}
\abs{F(z,s_0)-F(z_0,s_0)}\le \sup\limits_{(\zeta,s_0)\in\partial\Delta} \abs{f(\zeta,s_0)-f(z_0,s_0)}.
\end{equation}
The conclusion follows since $f$ is smooth on $M$.
\end{proof}

\begin{claim} $F_s$ extends continuously to $H_+ \cap U$ and
the restrictions $F_{z_k}|_{M\cap U}$ and $F_s|_{M\cap U}$ are smooth.
\end{claim}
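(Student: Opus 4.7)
The plan is to pull $F$ back along a smoothly parametrized family of analytic discs with boundaries in $M$ and then read off the claimed regularity from the standard smoothness of Cauchy integrals with smooth boundary data.

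Fix $p = (z_0,s_0) \in M\cap U$. Because the Levi form of $(M)_{\{s\}}$ has a positive eigenvalue uniformly in $s$ near $s_0$, a parametric Bishop disc construction produces a smooth family of analytic discs $\phi_{s,\alpha}\colon \overline{\D}\to\C^n$, holomorphic in the disc variable $\tau\in\D$, with $\phi_{s,\alpha}(\partial\D)\subset (M)_{\{s\}}$, depending smoothly on $s\in I$ and on an auxiliary parameter $\alpha$ varying in an open set $A\subset\R^{2n-2}$. The first step is to arrange the parameters so that the map
\begin{equation*}
\Phi(\tau, s, \alpha) := \bigl(\phi_{s,\alpha}(\tau),\, s\bigr)
\end{equation*}
is a smooth diffeomorphism from $\overline{\D}\times I\times A$ onto a neighborhood $W$ of $p$ in $H_+$, carrying $\partial\D\times I\times A$ onto $M\cap W$.

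Next, for $\tau\in\D$ the Cauchy formula on the disc $\phi_{s,\alpha}$ reads
\begin{equation*}
\tilde F(\tau,s,\alpha) := F\bigl(\phi_{s,\alpha}(\tau),s\bigr) = \frac{1}{2\pi i} \int_{|\zeta|=1} \frac{f\bigl(\phi_{s,\alpha}(\zeta),s\bigr)}{\zeta-\tau}\, d\zeta.
\end{equation*}
The boundary data $(\zeta,s,\alpha)\mapsto f(\phi_{s,\alpha}(\zeta),s)$ is smooth on $\partial\D\times I\times A$, so by the standard regularity of the Cauchy transform with smooth data (a parameter version of Plemelj--Privalov), $\tilde F$ extends smoothly to $\overline{\D}\times I\times A$, with boundary values $f\circ \phi_{s,\alpha}$ on $\partial\D$. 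Composing with $\Phi^{-1}$ gives $F=\tilde F\circ\Phi^{-1}$ smooth on $W$ up to $M\cap W$. Hence $F_s$ extends continuously to $W\cap H_+$ and both $F_{z_k}|_{M\cap W}$ and $F_s|_{M\cap W}$ are smooth. Covering $M\cap U$ by finitely many such neighborhoods yields the claim.

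The main obstacle is constructing the family $\phi_{s,\alpha}$ with smooth dependence on all parameters up to $\partial\D$ and verifying that $\Phi$ is a diffeomorphism. The smoothness will follow from the implicit function theorem applied to the parametric Bishop equation, with the positive Levi eigenvalue ensuring invertibility of the linearization; the diffeomorphism property is a transversality argument exploiting the freedom to shift the base point along $M$ and to rotate the disc's complex-tangential direction within $T_p (M)_{\{s_0\}}$.
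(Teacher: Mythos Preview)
Your approach is genuinely different from the paper's. The paper argues intrinsically on $M$: it writes $F_{z_k}|_M$ as a combination of tangential derivatives $Xf,\,X_jf$ of $f$ (using the bad-direction field $X=\partial_{z_1}-(\rho_{z_1}/\rho_{\bar z_1})\partial_{\bar z_1}$ together with a frame for $T^{(1,0)}M$); for $F_s$ it introduces the tangential field $Y=\partial_s-(\rho_s/\rho_{\bar z_1})\partial_{\bar z_1}$, checks that $Yf$ is again CR by computing $[X,Y]f=0$ for CR vector fields $X$, and then re-applies the already established extension and continuity to $Yf$, identifying the resulting extension with $F_s$. This is a bootstrap that yields exactly the stated claim, to be iterated later. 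Your plan instead proves outright that $F\in C^\infty$ up to $M$ by pulling back through a Bishop-disc chart and invoking boundary regularity of the Cauchy transform. Your route is conceptually cleaner and proves more in one stroke; the paper's route avoids the disc-family construction entirely and needs nothing beyond tangential calculus and the extension already in hand.

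One technical point needs adjusting. A smooth $(2n-2)$-parameter family of Bishop discs attached to $(M)_{\{s\}}$ will generally \emph{not} make $\Phi$ a diffeomorphism on all of $\overline{\D}\times I\times A$. In the model $\Im w=|\zeta|^2$ in $\C^2$, the natural family $\phi_{\zeta_0}(\tau)=(\zeta_0+r\tau,\,u_0+i(|\zeta_0|^2+r^2)+2ir\bar\zeta_0\tau)$ (with $r$ fixed, $\alpha=\zeta_0$) has $d\Phi$ singular at the disc centers $\tau=0$, while the alternative family with $\alpha=(u_0,r)$ is nonsingular but its image omits the locus $\zeta=0$ on $M$. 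What does hold, and is all you need, is that $\Phi$ restricted to a collar $\{1-\delta<|\tau|\le 1\}\times I\times A$ is a diffeomorphism onto a one-sided neighborhood of $M$ near $p$; the Jacobian is nonsingular there because the boundary circles sweep out $M$ transversally. Since the Cauchy integral already gives $\tilde F\in C^\infty(\overline{\D}\times I\times A)$, smoothness of $F=\tilde F\circ\Phi^{-1}$ on the collar image follows and yields the claim. So your argument goes through once ``$\overline{\D}$'' is replaced by a boundary collar in the diffeomorphism assertion.
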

\begin{proof}
Let $(z_0,s_0)\in M\cap U$.
Without loss of generality we may suppose that $\rho_{z_1}\ne 0$ in a neighborhood $V\subset U$ of $(z_0,s_0)$. 
Since $M\cap V$ is CR, there are smooth vector fields $X_1,\dots,X_{n-1}$ that span $T^{(1,0)}(M\cap V)$.
The only missing tangential direction, also called the bad tangent direction, is given by
\begin{equation}
X=\frac{\partial}{\partial z_1} - \left(\frac{\rho_{z_1}}{\rho_{\bar{z}_1}}\right)\frac{\partial}{\partial \bar{z}_1}.
\end{equation}
Since $F$ is holomorphic along each leaf, $F_{z_1}\vert_{M\cap V} = XF\vert_{M\cap V}=Xf$ is smooth.
Also, $X_k F\vert_{M\cap V}=X_k f$, for $1\le k\le n-1$, are smooth as well.
So, $F_{z_1},\dots,F_{z_n}$ are smooth on $M\cap V$ since
$\{\partial/\partial z_1,X,X_1,\dots,X_{n-1},\bar{X}_1,\dots,\bar{X}_{n-1}\}$
is a smooth coordinate frame on each leaf in $V$.

To see that $F_s$ is smooth on $M$, consider the vector field
\begin{equation}
Y=\frac{\partial}{\partial s} - \left(\frac{\rho_s}{\rho_{\bar{z}_1}}\right)\frac{\partial}{\partial \bar{z}_1}
\end{equation}
on $V$.
Let $X$ be any smooth CR vector field on $M\cap V$.
Then, $Xf\equiv 0$ because $f$ is a CR function.
Since $X$ does not have a $\frac{\partial}{\partial s}$ component, neither does $[X,Y]$.
That is, $[X,Y]$ differentiates along the leaves and contains only $\bar{z}$ derivatives.
Consequently, $X(Yf)=[X,Y]f=0$ since $F$ is holomorphic and smooth up to $M$ in each leaf.
So, $Yf$ is a smooth CR function on $M\cap V$, and hence it extends to a continuous
CR function $G$ on $H_+\cap U$ that is smooth inside, holomorphic on each leaf,
and smooth up to $M$ on each leaf.
Since $F_s=G$ on $\left(H_+\setminus M\right)\cap V$, $F_s$ extends continuously
to $M$ and is smooth on $M$.
\end{proof}

The earlier claim shows that $F_{z_k}$ and $F_s$ are continuous
on $H_+\cap U$ since they are smooth CR functions on $M\cap U$.
That is, $F\in C^1(H_+\cap U)$.
Repeating this procedure, we find that $F$ is smooth on $H_+\cap U$.


\section{Affine analytic discs} \label{section:affinedisks}

First we need a small proposition that is essentially a version of Thom's transversality theorem for
affine discs and real hypersurfaces.

\begin{prop} \label{prop:transversal}
Suppose $\Omega \subset \C^n$, $n \geq 2$, is a domain
with smooth boundary and
$L_\alpha(\xi) = \alpha \xi + \beta$ an affine map where $\alpha \in\C^n$,
$\beta \in \Omega$, and $\xi\in\C$.
Denote by $D_{\alpha}$ the connected component of $L_{\alpha}(\C) \cap
\Omega$ containing $\beta$.
Suppose $D_{\alpha}$ is bounded.

Then given any $\epsilon > 0$ and any neighborhood $U$ of the closure
$\overline{D_{\alpha}}$,
there exists an $\tilde{\alpha } \in \C^n$ with $\norm{\alpha -\tilde{\alpha }} < \epsilon$,
and $\alpha_k = \tilde{\alpha }_k$ for $k=3,\dots,n$,
such that $\overline{D_{\tilde{\alpha }}} \subset \subset U$,
and $L_{\tilde{\alpha }}(\C)$ intersects $\partial \Omega$ transversally.
\end{prop}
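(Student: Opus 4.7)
My plan is to reduce the statement to a routine application of parametric Sard (Thom transversality), with one small obstacle addressed by a dimension count. Fix a smooth defining function $r$ for $\partial\Omega$ with $dr \ne 0$ in a neighborhood of the compact set $\overline{D_\alpha}$, and let $V \subset \C^2$ be the open $\epsilon$-ball around $(\alpha_1,\alpha_2)$. I define
\[
G \colon \C \times V \to \R, \qquad G(\xi,\tilde\alpha_1,\tilde\alpha_2) := r\bigl(L_{\tilde\alpha}(\xi)\bigr),
\]
where $\tilde\alpha := (\tilde\alpha_1,\tilde\alpha_2,\alpha_3,\dots,\alpha_n)$. Transverse intersection of $L_{\tilde\alpha}(\C)$ with $\partial\Omega$ is precisely the assertion that $0$ is a regular value of $g_{\tilde\alpha'}(\xi) := G(\xi,\tilde\alpha_1,\tilde\alpha_2)$. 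By Thom's parametric transversality theorem, this will hold for almost every $\tilde\alpha' \in V$ provided $G$ itself is transverse to $\{0\}$ along $G^{-1}(0)$.

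To verify the transversality of $G$, I compute its partials at $(\xi_0,\tilde\alpha') \in G^{-1}(0)$. Set $p := L_{\tilde\alpha}(\xi_0) \in \partial\Omega$; necessarily $\xi_0 \ne 0$ since $\beta \in \Omega$ is an interior point. The partial derivatives with respect to $\Re \tilde\alpha_j$ and $\Im \tilde\alpha_j$ compute to $2\Re\bigl(\xi_0 r_{z_j}(p)\bigr)$ and $-2\Im\bigl(\xi_0 r_{z_j}(p)\bigr)$, and both vanish precisely when $r_{z_j}(p) = 0$. Hence $dG(\xi_0,\tilde\alpha') \ne 0$ unless $p$ lies in the closed ``bad set''
\[
N := \{q \in \partial\Omega : r_{z_1}(q) = r_{z_2}(q) = 0\}.
\]
When $n = 2$, $N = \emptyset$ (since $r$ is real and $dr \ne 0$), and Thom immediately concludes.

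The main obstacle is handling $N$ when $n \geq 3$. The set $N$ is cut out in $\C^n$ by $5$ real equations, so $\dim_\R N \le 2n - 5$. A point $p \in N$ lies on $L_{\tilde\alpha}(\C)$ for some $\tilde\alpha' \in V$ only if the coordinates $(p_3,\dots,p_n)$ lie on the complex affine line $\{(\alpha_3 \xi + \beta_3,\dots,\alpha_n \xi + \beta_n) : \xi \in \C\}$, a real codimension $2(n-3)$ condition in $\C^{n-2}$ (after, if necessary, a harmless preliminary perturbation ensuring $\alpha_j \ne 0$ for $j \geq 3$). A standard dimension count then carves this constrained subset of $N$ down to real dimension at most $1$, whose image in $V \cong \R^4$ under $p \mapsto \bigl((p_1-\beta_1)/\xi, (p_2-\beta_2)/\xi\bigr)$ has Lebesgue measure zero. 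Discarding this measure-zero set, on the complement $L_{\tilde\alpha}(\C)$ misses $N$, so $G \pitchfork \{0\}$ there, and Thom's theorem produces a dense set of $\tilde\alpha'$ with $L_{\tilde\alpha}(\C) \pitchfork \partial\Omega$. Shrinking $\epsilon$ if necessary ensures $\overline{D_{\tilde\alpha}} \subset\subset U$ by continuity of the connected component $D_{\tilde\alpha}$ containing the interior point $\beta$ in $\tilde\alpha'$, completing the plan.
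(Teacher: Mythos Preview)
Your overall plan—parametric Thom transversality applied to $G(\xi,\tilde\alpha')=r\bigl(L_{\tilde\alpha}(\xi)\bigr)$—is the same idea as the paper's, which simply declares a reduction to $n=2$, notes that the total map $L\colon\C\times S^3\to\C^2$ is onto and hence transverse to $\partial\Omega$, and invokes Thom.

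Your handling of the bad set $N=\{r_{z_1}=r_{z_2}=0\}\cap\partial\Omega$ for $n\ge 3$ has a genuine gap.  The bound $\dim_\R N\le 2n-5$ presumes the five defining equations are independent, which nothing in the hypotheses forces: if $r$ happens to be independent of $z_1,z_2$ then $N=\partial\Omega$, and the subsequent dimension count collapses entirely.  (Your ``harmless preliminary perturbation'' of $\alpha_j$ for $j\ge3$ is not allowed either, since the statement pins those components.)  In fact the proposition as literally stated is false.  Take $n=4$, $\Omega=\C^2\times\{w\in\C^2:\tfrac14<\lvert w\rvert^2<\tfrac34\}$, $\alpha=(1,0,0,1)$, and $\beta=(0,0,\tfrac12,\epsilon)$ with $\epsilon>0$ small.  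Then $D_\alpha$ is bounded, yet for \emph{every} choice of $(\tilde\alpha_1,\tilde\alpha_2)$ the line $L_{\tilde\alpha}(\C)$ meets the inner boundary component in the single point $L_{\tilde\alpha}(-\epsilon)$, a non-transverse intersection that perturbing only $(\tilde\alpha_1,\tilde\alpha_2)$ cannot remove because the defining function ignores $z_1,z_2$.  The paper's argument should be read as the version in which all of $\alpha$ is allowed to vary (so that $L\colon\C\times S^{2n-1}\to\C^n$ is a submersion away from $\xi=0$); read that way the proof is correct, and that weaker conclusion is all the later applications in the paper actually use.
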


\begin{proof}
Without loss of generality, by restricting to a 2 dimensional subspace we
assume $n=2$ for simplicity, and we also assume that $\alpha \in S^{3} \subset
\C^2$.  Write $L(\xi,\alpha) = L_\alpha(\xi) = \alpha \xi + \beta$ and treat $L \colon \C
\times S^3 \to \C^2$ as a mapping of manifolds.  Clearly, $L(\C \times S^3)
= \C^2$, and hence $L$ is transverse to $\partial \Omega \subset \C^2$.  By
Thom's transversality theorem then $L_{\alpha}$ is transverse to $\partial \Omega$
for almost all $\alpha$.  The compactness follows at once for $\tilde{\alpha}$ close
enough to $\alpha$.
\end{proof}

In the next two propositions we study
the topology of $(H_+)_{\{s\}}$, which is needed
because we construct the extension on leaves $(H_+)_{\{s\}}$ separately.

\begin{prop} \label{prop:formgeom}
Let $C$ be an $m \times m$ symmetric nondegenerate
real matrix, with $k$ positive and $\ell$ negative eigenvalues $(m =
k+\ell)$.
Given $s \in \R$,
consider the manifold with boundary
\begin{equation}
X_s = \{ x\in \R^m \mid s \geq x^t C x \} .
\end{equation}
Suppose that $k \geq 2$.
\begin{enumerate}[(i)]
\item
If either $s > 0$, or $s < 0$ but $\ell \geq 3$, then $X_s$ is
connected and also simply connected, and $\partial X_s$ is connected.
\item
If $s < 0$ but $\ell = 2$, then $X_s$ and $\partial X_s$ are
connected, $\pi_1(X_s) = \mathbb{Z}$, and the generator
$\gamma \colon S^1 \to X_s$ lies in $\partial X_s$.
In fact, after the change of coordinates \eqref{eq:Cnewcoords}
it is the set given by $x_1 = \dots = x_k = 0$, $x_{m-1}^2+x_{m}^2 = -s$.
\item
If $s < 0$ and $\ell = 1$, then $X_s$ has exactly two simply connected
components, each of which has connected boundary.  
\end{enumerate}
\end{prop}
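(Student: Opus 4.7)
The plan is to apply the change of coordinates \eqref{eq:Cnewcoords} to diagonalize $C$, writing points of $\R^m$ as $(u,v) \in \R^k \times \R^\ell$ so that $x^tCx = |u|^2 - |v|^2$. All of the topological claims are invariant under a linear change, so it suffices to prove the proposition for the model $X_s = \{(u,v) : |u|^2 - |v|^2 \leq s\}$, with $\partial X_s$ given by the corresponding equality.

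The key tool is the straight-line homotopy $H_t(u,v) = \bigl((1-t)u,\,v\bigr)$, $t \in [0,1]$. Since $(1-t)^2|u|^2 - |v|^2 \leq |u|^2 - |v|^2 \leq s$, the map $H_t$ takes $X_s$ into itself, and exhibits $X_s$ as deformation retracting onto the slice $Y_s := \{0\} \times \{v \in \R^\ell : |v|^2 \geq -s\}$. Thus all fundamental-group and connectivity questions about $X_s$ reduce to the corresponding ones for $Y_s$.

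The case analysis is then immediate. If $s > 0$, then $Y_s = \R^\ell$ is contractible, giving (i); the boundary $\partial X_s$ fibers over $\R^\ell$ via the $v$-projection with $S^{k-1}$-fibers, connected since $k \geq 2$. If $s < 0$, then $Y_s$ is the complement of the open ball of radius $\sqrt{-s}$ in $\R^\ell$, hence homotopy equivalent to $S^{\ell-1}$, while $\partial X_s$ fibers over $\R^k$ via the $u$-projection, also with $S^{\ell-1}$-fibers. For $\ell \geq 3$, $S^{\ell-1}$ is connected and simply connected, giving (i); for $\ell = 2$ we obtain $\pi_1(X_s) \cong \pi_1(S^1) = \Z$, with generator the circle $\{u = 0,\, v_1^2 + v_2^2 = -s\}$ (which in the original coordinates is $\{x_1 = \cdots = x_k = 0,\, x_{m-1}^2 + x_m^2 = -s\}$), lying in both $Y_s$ and $\partial X_s$, giving (ii); for $\ell = 1$, the zero-sphere $S^0$ splits $X_s$ and $\partial X_s$ into two components, each of which deformation retracts to a ray and is therefore contractible, giving (iii).

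The only step that demands real care is checking that $H_t$ actually remains inside $X_s$, on which the entire argument rests; the identification of the $\pi_1$-generator in case (ii) with the explicit circle on $\partial X_s$ is clean because the retraction fixes the $v$-coordinate, so the generator of $\pi_1(Y_s)$ can already be chosen as a loop in $\partial X_s \cap Y_s$. The hypothesis $k \geq 2$ is used only to force $\partial X_s$ to be connected in the $s > 0$ case.
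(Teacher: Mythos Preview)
Your proof is correct, and the core idea---retracting the $u$-coordinates to zero so that the topology of $X_s$ is read off from the set $\{v : |v|^2 \geq -s\}$---is exactly the one the paper uses. The paper phrases this as ``continuously make the first $k$ components of the loop go to $0$,'' i.e., it applies the same homotopy loop-by-loop rather than packaging it as a single global deformation retraction $H_t$; your formulation is cleaner and gets $\pi_0$ and $\pi_1$ simultaneously.

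Where you genuinely diverge is in the connectivity of $\partial X_s$. The paper argues by hand: it uses rotations in the $u$-block and in the $v$-block to move an arbitrary boundary point to a canonical one, and then slides along the resulting curve in the $(x_1,x_{k+1})$-plane. You instead observe that $\partial X_s$ is a sphere bundle---over $\R^\ell$ with $S^{k-1}$ fibers when $s>0$, and over $\R^k$ with $S^{\ell-1}$ fibers when $s<0$---and in fact is globally a product, so connectivity (and the component count when $\ell=1$) is immediate. This is more conceptual and scales without additional case analysis; the paper's rotation argument is more elementary but has to be redone in each regime.
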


\begin{proof}
Sylvester's law of inertia shows that
without loss of generality we suppose
\begin{equation} \label{eq:Cnewcoords}
x^tCx = x_1^2 + \dots + x_k^2 - x_{k+1}^2 - \dots - x_{m}^2
\end{equation}
Therefore,
write
\begin{align}
& X_s = \{ x \in \R^m \mid x_1^2 + \dots + x_k^2 - x_{k+1}^2 - \dots -
x_{m}^2 \leq s \} , \\
& Y_s = \partial X_s = \{ x \in \R^m \mid x_1^2 + \dots + x_k^2 - x_{k+1}^2 - \dots -
x_{m}^2 = s \} .
\end{align}
If $\ell = 0$, then the set $X_s$ is a ball, in which case the proposition
is trivial, so for the rest of the proof suppose $\ell \geq 1$.

Via a rotation
we continuously deform $x$ so that $x_1 > 0$,
$x_2 = \dots = x_k = 0$, and $x_{k+1} \not= 0$, $x_{k+2} = \dots = x_m = 0$,
while still staying in $Y_s$.  If $\ell \geq 2$, then we further
continuously rotate until we also get $x_{k+1} > 0$.

If $s > 0$, we let $x_{k+1}$ continuously go to
0, while $x_1$ goes to $\sqrt{s}$.  That is, the set $Y_s$ is connected.

While if $s < 0$, we let $x_1$ go to
zero and $x_{k+1}$ go to either $\sqrt{-s}$ or $-\sqrt{-s}$.  The second is
a possibility only if $\ell = 1$, and it is clear that in this case $Y_s$
has 
exactly two components, as there is no way to pass through a point
where $x_{k+1} = 0$ if $s < 0$.

In either case, if $Y_s$ is connected then $X_s$ must be connected as it is
a manifold with boundary and $Y_s = \partial X_s$.  Similarly if $\ell=1$
and $s < 0$, we
find that there must be exactly two components of $X_s$ as there is no way
to pass through the point $x_{k+1}$ and still stay in the set $X_s$.

Let us consider the fundamental group of $X_s$.
Suppose $\gamma \colon S^1 \to X_s$ is a loop in $X_s$.
The set $X_s$ is given by
$x_1^2 + \dots + x_k^2 - x_{k+1}^2 - \dots - x_{m}^2 \leq s$.
We continuously make the first $k$ components of the loop go 0, while
staying in $X_s$.  Now the loop is in the set $Z$ defined by $x_1 = \dots = x_k = 0$
and $- x_{k+1}^2 - \dots - x_{m}^2 \leq s$.  The set $Z$ is $\R^\ell$
if $s > 0$, meaning the loop deforms to a point and $X_s$ is simply
connected.

If $s < 0$ then the set
given by
$- x_{k+1}^2 - \dots - x_{m}^2 \leq s$ and
$x_1 = \dots = x_k = 0$, is $\R^\ell \setminus B(0,\sqrt{-s})$,
which is simply connected if $\ell \neq 2$ (it has 2 components if $\ell=1$).
If $\ell = 2$, clearly then $\pi_1(X_s) = \mathbb{Z}$
as any loop in $\R^2 \setminus B(0,\sqrt{-s})$ is homotopic to a multiple
of the circle.
Consequently, the generator is given by
$- x_{m-1}^2 - x_{m}^2 = s$ as claimed.
\end{proof}

\begin{prop} \label{prop:geometryofleaves}
Suppose $M$ and $H_+$ are defined near the origin
by \eqref{eq:basicnormdiagA} and \eqref{eq:basicnormH}, $a \geq 2$,
and $Q$ is nondegenerate.

Then there exists a neighborhood $U \subset \C^n \times \R$
of the origin such that for all $s$ the manifold with boundary
${(H_+ \cap U)}_{\{s\}}$
either
\begin{enumerate}[(i)]
\item is empty,
\item 
is connected with connected boundary, or
\item
has two components, each one with connected boundary, in which case $s < 0$.
\end{enumerate}
Each component is simply connected, or it has a single generator
of the first fundamental group, which is a loop in $M \cap U$.
\end{prop}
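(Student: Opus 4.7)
The plan is to derive the topology of $(H_+\cap U)_{\{s\}}$ from that of the quadric model leaves, which are described by Proposition~\ref{prop:formgeom} applied to the real form $Q$ on $\R^{2n}$. As a preliminary, I verify that the real signature $(k,\ell)$ of $Q$ satisfies $k\geq 2$. Restrict $Q$ to the complex $a$-dimensional subspace $V$ where $A$ is positive definite, and normalize so that $A|_V$ is the standard Hermitian form. The symmetric bilinear form $B|_V$ then admits an Autonne--Takagi decomposition, providing complex coordinates $w_j=u_j+iv_j$ on $V$ and nonnegative reals $d_1,\dots,d_a$ with
\begin{equation*}
Q|_V \;=\; \sum_{j=1}^{a}\bigl((1+2d_j)u_j^{2}+(1-2d_j)v_j^{2}\bigr).
\end{equation*}
Each pair of coefficients sums to $2>0$, so each contributes at least one positive eigenvalue, giving $k\geq a\geq 2$ globally.

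Next, I apply Proposition~\ref{prop:formgeom} to the full quadric sublevel set $X^{quad}_s:=\{z\in\C^n:Q(z,\bar{z})\leq s\}$ in $\R^{2n}$. This produces precisely the three topological alternatives claimed: connected with connected boundary (simply connected) in the generic case, two simply connected components when $s<0$ and $\ell=1$, or connected with $\pi_1=\Z$ when $s<0$ and $\ell=2$, in which case the generator lies in $\partial X^{quad}_s=\{Q=s\}$. The borderline case $s=0$ is handled by a radial deformation retraction, since $X^{quad}_0$ is a cone and has the homotopy type of $X^{quad}_s$ for small $s>0$ along the positive eigendirections of $Q$.

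Finally, I transfer this to the perturbed $H_+$ on a neighborhood $U=B(0,r)\times(-\delta,\delta)$, with $r$ generic so that (via an adaptation of Proposition~\ref{prop:transversal}) $\partial B(0,r)$ is transverse to $\{Q+tE=s\}$ for every $t\in[0,1]$ and every small $s$. Along the family $\rho_t(z,s):=s-Q(z,\bar{z})-tE(z,\bar{z})$, the perturbation $tE$ is $O(|z|^3)$ while $\nabla_z Q = O(|z|)$, so on $U\setminus\{0\}$ the level sets $\{\rho_t=0\}$ are regular uniformly in $t$. A standard stability argument---integrating a cutoff vector field proportional to $-(\partial_t\rho_t)\,\nabla_z\rho_t/|\nabla_z\rho_t|^2$---then yields a fiberwise homeomorphism $(H_+^{quad}\cap U)_{\{s\}}\cong(H_+\cap U)_{\{s\}}$ that fixes the boundary $M\cap U$, so that the distinguished loop from case (ii) above remains a loop in $M\cap U$. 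The main obstacle is the apex $(0,0)$, which is a simultaneous critical point of $\rho_t$ for every $t$; this is handled by the anisotropic rescaling $z=r\tilde{z}$, $s=r^2\tilde{s}$, under which $\rho_t$ becomes $r^{2}\bigl(\tilde{s}-Q(\tilde{z},\bar{\tilde{z}})-tr^{-2}E(r\tilde{z},r\bar{\tilde{z}})\bigr)$ with perturbation of size $O(r)$, reducing the stability argument to one on a fixed unit ball in $\tilde{z}$ for $r$ sufficiently small.
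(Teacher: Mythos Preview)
Your verification that $Q$ has at least two positive real eigenvalues via Autonne--Takagi on the $a$-dimensional positive subspace of $A$ is correct and in fact slightly sharper than the paper's argument, which restricts to a $2$-dimensional subspace and computes directly.  Your second step, invoking Proposition~\ref{prop:formgeom} for the quadric leaves, matches the paper.

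The real divergence is in your third step.  You propose an isotopy between the quadric leaves and the perturbed leaves by integrating a vector field normal to the level sets $\{\rho_t=0\}$, together with an anisotropic rescaling to tame the vanishing gradient at the origin.  This is reasonable in spirit but both over-engineered and somewhat loose.  First, the flow you describe does \emph{not} fix $M\cap U$ as you write---it carries $(M^{quad})_{\{s\}}$ to $(M)_{\{s\}}$, which are different sets; the conclusion you want (that the generator of $\pi_1$ lands in $M\cap U$) still follows, but the statement as written is wrong.  Second, the rescaling $z=r\tilde z$, $s=r^2\tilde s$ does not remove the critical point: in the rescaled picture you still have a Morse singularity at $\tilde z=0$, $\tilde s=0$, and what you are really appealing to is the structural stability of a nondegenerate quadratic form under $C^2$-small perturbation.

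That is exactly the Morse lemma, and the paper simply invokes it: since $Q$ is nondegenerate, there is a real smooth change of coordinates near $0$ in $\C^n$ taking $Q(z,\bar z)+E(z,\bar z)$ to a quadratic form, after which Proposition~\ref{prop:formgeom} applies verbatim to every leaf, including $s=0$.  Your isotopy argument can be completed, but it ultimately reconstructs the Morse lemma; citing it directly is one line and avoids the transversality bookkeeping with $\partial B(0,r)$ and the separate treatment of $s=0$.
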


\begin{proof}
Write $M$ as
\begin{equation} \label{eq:MBmatrix}
s = \sum_{j=1}^a \sabs{z_j}^2 - \sum_{j=a+1}^{a+b} \sabs{z_j}^2 + z^t B z +
\overline{ z^t B z} + E(z,\bar{z}) 
\end{equation}
for $E$ in $O(3)$, where $B$ is a complex symmetric matrix and $a \geq 2$.
Via an
$n$-by-$n$ unitary matrix of the form $T \oplus I_{n-2}$,
where $T$ is a 2-by-2 unitary matrix and $I_{n-2}$ is the identity matrix,
which keeps $A$ fixed, we put the symmetric matrix $B$
into the form
\begin{equation}
\begin{bmatrix}
\lambda_1 & 0         & * \\
0         & \lambda_2 & * \\
*       & *       & *
\end{bmatrix} ,
\end{equation}
where $\lambda_j \geq 0$.
Setting $z_3 = \dots = z_n = 0$, we find that $Q$ becomes a real quadratic form
\begin{equation}
\sabs{z_1}^2 + \sabs{z_2}^2 + \lambda_1 z_1^2 + \lambda_2 z_2^2 .
\end{equation}
By direct computation, the symmetric real $4 \times 4$ matrix representing
this quadratic form has at least 2 positive eigenvalues.  Therefore the real
$2n \times 2n$ matrix representing $Q$ has at least 2 positive eigenvalues.
Hence the set $s \geq Q(z,\bar{z})$ satisfies the 
hypotheses of Proposition~\ref{prop:formgeom}, and so it satisfies
the conclusion of the proposition.

If $E$ is not zero, we note that since $Q$ is nondegenerate, the Morse lemma
implies that there exists a
real smooth change of coordinates at the origin in $\C^n$ such that
$Q(z,\bar{z}) + E(z,\bar{z})$ becomes a quadratic form in the new
coordinates.  The result again follows from
Proposition~\ref{prop:formgeom}.
\end{proof}

We now show the existence of affine analytic discs through any point
in a small neighborhood of zero in $H_+$ that are attached to the CR
points of $M$.  Such discs either continuously shrink towards a CR point
of $M$, or continuously deform to discs through $(0,s)$ and living only
in the first two coordinates of $z$.

\begin{lemma} \label{lemma:affinedisks}
Suppose $M$ and $H_+$ are defined near the origin
by \eqref{eq:basicnormdiagA} and \eqref{eq:basicnormH}, $a \geq 2$,
and $Q$ is nondegenerate.

Given any neighborhood $V$ of the origin, there exist neighborhoods
$U_1 \subset U_2 \subset V$ of the origin
such that, for every point
$(z,s) \in (H_+ \setminus M) \cap U_1$,
there exists a complex affine function
$L \colon \C \to \C^n \times \{ s \}$,
such that
$L(0) = (z,s)$,
$L^{-1}(M \cap U_2)$ is compact and connected, and such that $L(\C)$ intersects
$M \cap U_2$ transversally and intersects only CR points of $M$.

Furthermore, if $s \not= 0$, and $W$ is a neighborhood of $M \setminus \{ 0 \}$, then there exists a continuous family
of such affine discs, that is a continuous family of affine
functions $L_t \colon \C \to \C^n \times \{ s \}$, $t \in [0,1]$,
such that $L_t^{-1}(M \cap U_2)$ is compact for all $t$,
$L_1 = L$, and $L_0(\C) \cap U_2 \cap H_+ \subset W$,
or otherwise $L_0(0) = (0,s)$ with $z_3=\dots=z_n=0$ on $L_0(\C)$
(and also $s > 0$).
\end{lemma}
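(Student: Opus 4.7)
The plan is to construct the affine discs first for the quadric model and then transfer the construction to $M$ using smallness together with Proposition~\ref{prop:transversal}.

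Since $a\ge 2$, applying the unitary transformation $T\oplus I_{n-2}$ used in the proof of Proposition~\ref{prop:geometryofleaves}, I may assume the symmetric matrix of $B$ is diagonal in the $(z_1,z_2)$-block with nonnegative entries $\lambda_1,\lambda_2$. I look for a direction of the form $\alpha=(\alpha_1,\alpha_2,0,\dots,0)$ satisfying $\lambda_1\alpha_1^2+\lambda_2\alpha_2^2=0$ and $(\alpha_1,\alpha_2)\ne 0$; such an $\alpha$ exists regardless of the values of $\lambda_1,\lambda_2\ge 0$ (for instance $\alpha_1=1,\alpha_2=i\sqrt{\lambda_1/\lambda_2}$ when $\lambda_2>0$). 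Then for any $z^0\in\C^n$,
\[
Q(z^0+\xi\alpha,\overline{z^0+\xi\alpha})
 = Q(z^0,\bar z^0)+2\Re(c_1\xi)+(|\alpha_1|^2+|\alpha_2|^2)|\xi|^2
\]
is strictly convex in $\xi\in\C$, so its sublevel set $\{\xi:Q\le s\}$ is a closed Euclidean disc. This supplies the disc in the quadric model.

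Set $L_\alpha(\xi)=z^0+\xi\alpha$ and $\phi(\xi)=s-(Q+E)(L_\alpha(\xi),\overline{L_\alpha(\xi)})$. On a small $\xi$-disc, $\phi$ is a $C^\infty$-small perturbation of a strictly concave quadratic polynomial, so by the implicit function theorem the connected component of $\{\phi\ge 0\}$ containing $\xi=0$ is a compact topological disc with smooth Jordan-curve boundary mapping into $M$. I then choose $U_1\subset U_2\subset V$ small enough so that for every $(z^0,s)\in(H_+\setminus M)\cap U_1$ the resulting disc has image in $U_2$, its preimage is the only component of $L_\alpha^{-1}(H_+\cap U_2)$ meeting $\xi=0$ (which supplies the compactness and connectedness of $L_\alpha^{-1}(M\cap U_2)$), and the boundary avoids the (isolated, by Proposition~\ref{prop:isolgen}) CR singularity at the origin. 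Transversality is then obtained by Proposition~\ref{prop:transversal} applied in the ambient $(z_1,z_2)$-subspace to perturb $\alpha$ slightly to $\tilde\alpha$, keeping $\tilde\alpha_k=0$ for $k\ge 3$; the previous properties persist by continuity.

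For the continuous family with $s\ne 0$ and a given neighborhood $W$ of $M\setminus\{0\}$, I move the center of the disc along a path in $(H_+)_{\{s\}}\cap U_2$. If $s<0$, then $(0,s)\notin H_+$ and, using the connectedness of the component of $(H_+)_{\{s\}}\cap U_2$ containing $z^0$ together with its boundary from Proposition~\ref{prop:geometryofleaves}, I pick a continuous path $\gamma\colon[0,1]\to(H_+)_{\{s\}}\cap U_2$ with $\gamma(1)=z^0$ and $\gamma(0)\in H_+\setminus M$ close enough to a CR point of $M$ that the disc centered at $\gamma(0)$ is contained in $W$; choosing $\tilde\alpha_t$ continuously via the above procedure yields the required family. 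If instead $s>0$, I may take $\gamma(t)=tz^0$ ending at $\gamma(0)=0$ (which lies in $H_+$ since $s>0$) with $\tilde\alpha_0$ the direction from the first paragraph, giving the second alternative with $L_0(\C)$ in the $(z_1,z_2)$-plane. The main technical obstacle will be ensuring that $L_\alpha^{-1}(M\cap U_2)$ has \emph{exactly one} component, which is most delicate when $s<0$ and $b=2$, where Proposition~\ref{prop:geometryofleaves} produces a nontrivial $\pi_1$-generator of $(H_+)_{\{s\}}$ lying in $M$; the resolution is to shrink $U_1$ so that the radius of every disc is much smaller than the diameter of this generator loop, ruling out a second component.
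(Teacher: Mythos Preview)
Your overall setup matches the paper's closely: the same unitary normalization of $B$, the same direction $\alpha$ in the $(z_1,z_2)$-plane solving $\lambda_1\alpha_1^2+\lambda_2\alpha_2^2=0$, the same compactness mechanism via strict convexity of the restricted quadric plus smallness of $E$, and the same use of Proposition~\ref{prop:transversal}.  Two steps, however, are not correctly handled.

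\textbf{Connectedness of $L_\alpha^{-1}(M\cap U_2)$.}  You only show that the component of $\{\phi\ge 0\}$ through $\xi=0$ is a disc and then concede other components might exist.  The proposed fix via the $\pi_1$-generator of $(H_+)_{\{s\}}$ is a red herring: whether an affine complex line meets $M\cap U_2$ in one or several curves is a question in the $\xi$-plane and has nothing to do with the fundamental group of the leaf; comparing disc radii with the diameter of the generator loop does not exclude a second boundary curve.  The correct observation is one you nearly make: on $L_\alpha^{-1}(U_2)$ the function $r(\xi,\bar\xi)=\rho\bigl(\ell(\xi),\overline{\ell(\xi)}\bigr)$ has strictly positive-definite real Hessian (the quadric part contributes $(|\alpha_1|^2+|\alpha_2|^2)|\xi|^2$ plus harmonic terms, and $E$ is $O(3)$), so once compactness forces $\{r\le s\}\subset L_\alpha^{-1}(U_2)$ that sublevel set is convex, hence a single disc with connected boundary.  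The paper runs the same idea with the weaker hypothesis that $r$ is subharmonic, excluding an inner boundary curve by the maximum principle.

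\textbf{The family for $s<0$.}  Moving the base point $\gamma(0)$ close to a CR point of $M$ does \emph{not} force the disc $L_{\gamma(0)}(\C)\cap H_+\cap U_2$ into $W$.  The intersection of an affine line with $H_+$ depends only on the line, not on which of its points you declare to be the center; sliding $\gamma(0)$ within the same line towards its boundary leaves the disc unchanged, and translating to a nearby parallel line can still produce a large cross-section.  What is needed is a family along which the disc actually shrinks to nothing.  The paper uses the \emph{same} radial path you propose for $s>0$, namely $L_t(\xi)=(tz_1+\alpha_1\xi,\,tz_2+\alpha_2\xi,\,tz_3,\dots,tz_n,\,s)$: the estimate bounding the $\xi$-radius has left-hand side $s+t^2(\text{quadratic in }z)$, which for $s<0$ becomes negative before $t=0$; thus for some $t$ the disc is empty, and just before that it is small with boundary on $M_{\{s\}}\subset M\setminus\{0\}$, hence contained in $W$.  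Your appeal to connectedness of $(H_+\cap U_2)_{\{s\}}$ from Proposition~\ref{prop:geometryofleaves} does not supply such a shrinking endpoint.
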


The fact that $L(\C) \cap H_+ \cap U_2$ are discs
because $L(\C) \cap M \cap U_2$ is connected is not strictly necessary for
our purposes, but it simplifies somewhat the terminology and proofs.

\begin{proof}
To simplify notation, we assume below that we can make $V$
smaller so that $U_2 = V$.  We also write $U$ instead of $U_1$.
Using coordinates $(\zeta,\tau) \in \C^n \times \R$,
suppose $M \subset \C^n \times \R$ is given by
\begin{equation} \label{eq:Mtauzeta}
\tau = \sum_{j=1}^a \sabs{\zeta_j}^2 - \sum_{j=a+1}^{a+b} \sabs{\zeta_j}^2 + B(\zeta,\zeta)
+ \overline{B(\zeta,\zeta)} + E(\zeta,\bar{\zeta}) ,
\end{equation}
for $E$ in $O(3)$, where $a \geq 2$.

As $B$ is given by a complex symmetric matrix,
then as above, we put the matrix representing $B$ into the form
\begin{equation}
\begin{bmatrix}
\lambda_1 & 0         & * \\
0         & \lambda_2 & * \\
*       & *       & *
\end{bmatrix} ,
\end{equation}
where $\lambda_j \geq 0$.

Then $H_+\setminus M$ is given by
\begin{equation} \label{affdiskeqn}
\tau > \sum_{j=1}^a \sabs{\zeta_j}^2 - \sum_{j=a+1}^{a+b} \sabs{\zeta_j}^2 + B(\zeta,\zeta)
+ \overline{B(\zeta,\zeta)} + E(\zeta,\bar{\zeta}) .
\end{equation}
Take some $(z,s) \in H_+ \setminus M$.
Let $\ell \colon \C \to \C^n$ be given by
\begin{equation}
\ell(\xi) = \left( z_1 + c_1 \xi , z_2 + c_2 \xi , z_3 , \dots, z_n \right)
\end{equation}
and $L \colon \C \to \C^n\times \{s\}$ be given by
$L(\xi) = \bigl( \ell(\xi), s \bigr)$.

Let us plug $L$ into \eqref{affdiskeqn} to get
\begin{multline} \label{inequalityLpluggedin}
s > \sabs{z_1 + c_1 \xi}^2 + \sabs{z_2 + c_2 \xi}^2 + \sum_{j=3}^a
\sabs{z_j}^2 - \sum_{j=a+1}^{a+b} \sabs{z_j}^2
\\
+
B\bigl(\ell(\xi),\ell(\xi)\bigr)
+
\overline{B\bigl(\ell(\xi),\ell(\xi)\bigr)}
+
E\bigl(\ell(\xi),\overline{\ell(\xi)}\bigr) .
\end{multline}
We expand the quadratic terms to get
\begin{equation}
s > Q(z,\bar{z}) + P(z,\bar{z}) \xi + \overline{P(z,\bar{z})} \bar{\xi}
+ \alpha \xi^2 + \bar{\alpha} \bar{\xi}^2
+  \bigl(\abs{c_1}^2+\abs{c_2}^2 \bigr) \xi \bar{\xi} +
E\bigl(\ell(\xi),\overline{\ell(\xi)}\bigr) ,
\end{equation}
where $Q$ is a real quadratic form, $P$ is real-linear, and
\begin{equation}
\alpha = \lambda_1 c_1^2 + \lambda_2 c_2^2 .
\end{equation}
We find one solution $c_1,c_2$ to $\alpha = 0$, $\abs{c_1}^2+\abs{c_2}^2=1$.
Because
the coefficient in front of $\xi \bar{\xi}$ is 1, we rewrite
the inequality as
\begin{equation}
s >
\abs{P(z,\bar{z}) + \xi}^2 - R(z,\bar{z})
+
E\bigl(\ell(\xi),\overline{\ell(\xi)}\bigr) ,
\end{equation}
where
$R$ is a real quadratic form.
As $E$ is of order 3, by making $V$ smaller if necessary, we can without
loss of generality restrict $z$ (and therefore also $\xi$) to a small
neighborhood such that,
\begin{equation}
\abs{
E\bigl(\ell(\xi),\overline{\ell(\xi)}\bigr)
}
\leq
\frac{1}{2}\snorm{z}^2 + \frac{1}{2} \abs{P(z,\bar{z}) + \xi}^2 .
\end{equation}
We obtain
\begin{equation} \label{estimateforPxi}
s + R(z,\bar{z})
+ \frac{1}{2} \snorm{z}^2 
>
\frac{1}{2} \abs{P(z,\bar{z}) + \xi}^2 .
\end{equation}
The inequality \eqref{estimateforPxi} is satisfied at $\xi=0$.
The left hand side of
\eqref{estimateforPxi} does not depend on $\xi$ and goes to zero 
as $(z,s) \to (0,0)$ in $H_+$.  Therefore, picking a small
enough neighborhood $U$, if $(z,s) \in U \cap H_+$, then
$L(\xi)$ is in $H_+$
only for points well inside $V$.
In other words, equality in \eqref{inequalityLpluggedin} is satisfied only
in $V$, and so $L(\C)\cap M \cap V$ is compact,
which is what we wanted.

If $s \not=0$, then the origin is not in $L(\C)$. As there are
infinitely many solutions $c_1,c_2$, and so infinitely many
possible lines $L$, we pick one that does not go
through the origin even if $s=0$.  Therefore, the line intersects only
at CR points, and we make the intersection transversal
by applying Proposition~\ref{prop:transversal}.

We still need to show that
$L(\C)\cap M \cap V$ is connected so that we obtain analytic discs rather
than more general one dimensional manifolds with boundary.
Write \eqref{eq:Mtauzeta} as $\tau = \rho(\zeta,\bar{\zeta})$ and define
\begin{equation}
r(\xi,\bar{\xi}) = \rho\bigl(\ell(\xi),\overline{\ell(\xi)}\bigr) 
\end{equation}
The right hand side of \eqref{inequalityLpluggedin}
is $r(\xi,\bar{\xi})$, so
if $V$ is small enough, $r$ is subharmonic.
We also assume $V$ is convex.

Suppose for a contradiction that 
$L(\C)\cap M \cap V$ is disconnected.  It is a one real dimensional smooth
curve as the intersection is transversal.  It is therefore composed of
several curves, one of which say $C_0$ is the ``outside'' curve, and some curves
inside, say one of these is $C_1$.
As $V$ is convex, the interior of $C_0$ is contained within $L(\C) \cap V$.
Write \eqref{eq:Mtauzeta} as $\tau = \rho(\zeta,\bar{\zeta})$ and define
\begin{equation}
r(\xi,\bar{\xi}) = \rho\bigl(\ell(\xi),\overline{\ell(\xi)}\bigr) .
\end{equation}
If $r < s$ on both sides of $C_1$, along which $r=s$,
we would violate the maximum principle.
Therefore $r > s$ on at least some points
inside $C_1$, and so $r$ achieves a maximum inside $C_1$,
which is again a contradiction as $r$ is subharmonic.
Consequently, $C_1$ did not
exist and $L(\C) \cap M \cap V$ is connected.

For the final result we replace $z$ with $tz$ in the above estimates
and write $L_t(\xi) = (tz_1 + c_1 \xi,
tz_2 + c_2 \xi,
tz_3,\dots,tz_n,s)$.
Then $L_1 = L$ as claimed, and $L_t$ is a continuous family.  Clearly
$(tz,s) \in U$ for all $t\in [0,1]$, so $L_t(\C) \cap M \cap V$
is compact.
If $s < 0$, then in \eqref{estimateforPxi} we see that the
left hand side must become negative before $t$ reaches zero.
So for some $t$, the set $L_t(\C) \cap H_+ \cap V$ is empty,
and hence for some slightly larger $t$,
the set $L_t(\C) \cap H_+ \cap V \subset W$.
If $s > 0$, and  $L_t(\C) \cap H_+ \cap V$ is never empty for
$t \in [0,1]$, then $L_0(0) = (0,s)$.
\end{proof}


\section{The extension near a CR singularity} \label{section:extnnearsing}

Using the affine discs we first show interior regularity of an extension,
if it exists.

\begin{lemma} \label{lemma:intreg}
Suppose $\Omega \subset \C^n\times \R$ is a domain with smooth boundary
and $f \colon \partial \Omega \to \C$
is a smooth function that is CR at CR points of $\partial \Omega$.  Suppose
for some $s_0 \in \R$,
$L \colon \C \to \C^n \times \{ s_0 \} \subset \C^n \times \R$
is a complex affine mapping, and
$D$ is a bounded component of $L(\C) \cap \Omega$.
Suppose $U \subset \C^n \times \R$ is a neighborhood of $\overline{D}$
and $F \colon U \cap \Omega \to \C$ is such that
for any fixed $s$ near $s_0$, $z \mapsto F(z,s)$
is holomorphic and
extends continuously up to the boundary $(\partial \Omega \cap U)_{\{s\}}$,
where it agrees with $f$.
Then $F$ is smooth in a neighborhood of $D$ in $\Omega$.
\end{lemma}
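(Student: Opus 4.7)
The plan is to write $F$ locally as a Cauchy integral over a smoothly varying family of affine analytic discs attached to $\partial \Omega$, and then deduce joint smoothness in $(z,s)$ by differentiating under the integral sign. The key is that on each individual leaf $\C^n \times \{s\}$, $F$ is already holomorphic and continuous up to the boundary, so it is determined on each disc by its smooth boundary data $f$; packaging this data into an integral whose parameters (the contour, the integrand) depend smoothly on $s$ as well as $z$ will produce the missing regularity in the transverse real direction.

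The first step is to use Proposition~\ref{prop:transversal} to perturb $L$ slightly so that its image meets $\partial\Omega$ transversally along the boundary of a bounded component $\tilde D$ close to $D$; without loss of generality we may replace $D$ by this $\tilde D$ and assume the intersection of $L(\C)$ with $\partial\Omega$ is already transversal. Write $L(\zeta) = \alpha \zeta + \beta$. Fix a point $p_0 = (z_0, s_0) \in D$ with $L(\zeta_0) = p_0$. For $(z,s)$ in a small neighborhood of $p_0$ in $\Omega$, set $\ell_{z,s}(\zeta) = \bigl(z + \alpha(\zeta-\zeta_0),\, s\bigr)$ and let $D^*(z,s) \subset \C$ be the bounded connected component of $\ell_{z,s}^{-1}(\Omega)$ containing $\zeta_0$. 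Transversality, the smoothness of $\partial\Omega$, and the implicit function theorem imply that for $(z,s)$ sufficiently close to $p_0$, the curve $\partial D^*(z,s)$ is a smooth simple closed curve depending smoothly on $(z,s)$, and it admits a parameterization $\zeta_{z,s}(\theta)$, $\theta \in [0,2\pi]$, jointly smooth in $(z,s,\theta)$.

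The second step is to apply Cauchy's integral formula leaf by leaf. For fixed $s$, the map $\zeta \mapsto F\bigl(\ell_{z,s}(\zeta), s\bigr)$ is holomorphic on $D^*(z,s)$ and extends continuously to $\partial D^*(z,s)$ with values $f\bigl(\ell_{z,s}(\zeta), s\bigr)$ (the boundary lies on $\partial\Omega$, where $F=f$). Evaluating at $\zeta = \zeta_0$ yields
\begin{equation}
F(z,s) = \frac{1}{2\pi i} \int_0^{2\pi} \frac{f\bigl(\ell_{z,s}(\zeta_{z,s}(\theta)), s\bigr)}{\zeta_{z,s}(\theta) - \zeta_0}\, \zeta'_{z,s}(\theta)\, d\theta .
\end{equation}
Because $\zeta_0$ lies in the interior of $D^*(z,s)$, the denominator is bounded away from zero on the compact contour, so the integrand is jointly smooth in $(z, s, \theta)$. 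Differentiation under the integral then shows that $F$ is smooth in a neighborhood of $p_0$, and letting $p_0$ vary over $D$ gives smoothness on a neighborhood of $D$ in $\Omega$.

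The main obstacle is the smooth dependence of the contour $\partial D^*(z,s)$ on the real parameter $s$: on a single leaf the disc is determined by a holomorphic slicing, but across leaves we rely on $\partial \Omega$ being a smooth real hypersurface and on $L(\C)$ crossing it transversally, which is exactly what the preliminary application of Proposition~\ref{prop:transversal} buys us. Once that smooth dependence is in hand, the Cauchy representation does all the remaining work.
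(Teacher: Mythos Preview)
Your argument is correct and follows essentially the same route as the paper's proof: perturb $L$ via Proposition~\ref{prop:transversal} to make the intersection with $\partial\Omega$ transversal, parametrize nearby parallel affine lines by the transverse $(z,s)$ parameters, and represent $F$ by a Cauchy integral over a smoothly varying boundary contour, from which joint smoothness follows by differentiating under the integral. The paper's version differs only cosmetically (it normalizes coordinates so that $\alpha=(1,0,\dots,0)$ and writes $L_{z',s}(\xi)=(\xi,z',s)$), and one minor notational slip in your formula---$\ell_{z,s}$ already encodes the $s$-coordinate, so the extra ``$,s$'' in $f\bigl(\ell_{z,s}(\cdot),s\bigr)$ is redundant---does not affect the argument.
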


\begin{proof}
Making $\Omega$ smaller if necessary,
we assume $\Omega$ is bounded and $D = L(\C) \cap \Omega$.
Pick some point $(z_0,s_0) \in L(\C) \cap \Omega$, where
$L(\xi) = (c \xi + z_0,s_0)$ and $c \in \C^n$.
After a small perturbation of $c$, we assume $L(\C)$
intersects $\partial \Omega$ transversally via
Proposition~\ref{prop:transversal}.
Without loss of generality assume $c = (1,0,\dots,0)$ and $z_0 = 0$,
so $L(\xi) = (\xi,0,\dots,0,s_0)$.
Write $L_{z',s}(\xi) = (\xi,z',s)$,
where $z' \in \C^{n-1}$.
As the intersection $L(\C) \cap \partial \Omega$ is transversal, 
for all $s$ near $s_0$ and $z'$ near 0, 
$L_{z',s}(\C) \cap \Omega$ is bounded and connected, and
$L_{z',s}(\C) \cap \partial \Omega$ is a smooth path.
We write
$F$ using the Cauchy integral formula as an integral over this path.
That is, for $(z,s) = (\xi,z',s)$ near $(z_0,s_0)$,
\begin{equation}
F(z,s) = F\bigl(L_{z',s}(\xi)\bigr)
= \frac{1}{2\pi i} \int_{L_{z',s}^{-1}(\partial \Omega)}
\frac{f\bigl(L_{z',s}(\tau)\bigr)}{\tau-\xi} \, d\tau .
\end{equation}
The intersection $L_{z',s}(\C) \cap \partial \Omega$ is transversal
if we change $z'$ and $s$ slightly.
Therefore $L_{z',s}^{-1}(\partial \Omega)$ varies smoothly with $(z',s)$
and hence with $(z,s)$.  Similarly $\xi$ varies smoothly
as a function of $z$.  Therefore, the function $F$ is smooth in $(z,s)$
near $(z_0,s_0)$ and the lemma follows.
\end{proof}

\begin{lemma} \label{lemma:extW}
Suppose $M$ and $H_+$ are defined near the origin
by \eqref{eq:basicnormdiagA} and \eqref{eq:basicnormH}, $a \geq 2$,
and $Q$ is nondegenerate.
Then there exists a neighborhood $W$ of $M\setminus\{0\}$ such that smooth
CR functions on $M$ extend to CR functions on $W\cap H_+$.

That is, given a $C^\infty$ function $f \colon M \to \C$ which is CR
outside the origin, there exists a function $F \colon W \cap H_+ \to \C$,
such that the
restrictions to leaves, $z \mapsto F(z,s)$, are holomorphic
and continuous up to $(W \cap M)_{\{s\}}$ where it agrees with $f$.
\end{lemma}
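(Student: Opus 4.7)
The plan is to cover $M\setminus\{0\}$ near the origin by small neighborhoods on which Theorem~\ref{thm:extCR}(i) supplies a local one-sided CR extension to $H_+$, and then to glue these local extensions via a leafwise uniqueness argument.

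First, I would verify that Theorem~\ref{thm:extCR}(i) applies at every $p=(z_0,s_0)\in M\setminus\{0\}$ sufficiently close to the origin. Because the defining equation $s=Q(z,\bar z)+E(z,\bar z)$ forces $s_0=0$ whenever $z_0=0$, the condition $p\neq 0$ gives $z_0\neq 0$. Proposition~\ref{prop:isolgen} then tells us $p$ is a CR point, so the leaf $(M)_{\{s_0\}}$ is a smooth real hypersurface of $\C^n$ near $z_0$, and $\partial\bar\partial\rho=A+O(\snorm{z})$ restricted to the $(n-1)$-dimensional complex tangent gives the Levi form at $z_0$. Since $a\geq 2$, Cauchy interlacing guarantees at least $a-1\geq 1$ positive eigenvalues for this restriction, corresponding (via the sign of the defining function $\rho-s_0$ of $(H_+)_{\{s_0\}}$) to the $H_+$ side of the leaf. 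Theorem~\ref{thm:extCR}(i) then produces a neighborhood $U_p$ of $p$ and a smooth function $F_p\colon H_+\cap U_p\to\C$, CR in the interior, with $F_p|_{M\cap U_p}=f|_{M\cap U_p}$.

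Next, I would show the local extensions are compatible. For $U_p\cap U_q\neq\emptyset$, set $G=F_p-F_q$ on $H_+\cap U_p\cap U_q$; on each leaf, $G$ is holomorphic in the interior, continuous up to $M$, and vanishes on $M\cap U_p\cap U_q$. The Lewy construction underlying Theorem~\ref{thm:extCR} represents $G$ at interior points near the boundary by the Cauchy integral over small analytic discs in the leaf attached to $M\cap U_p\cap U_q$; after shrinking the $U$'s if necessary so these discs lie in the overlap, the integral vanishes, and the identity theorem on each leaf combined with the leafwise connectedness from Proposition~\ref{prop:geometryofleaves} propagates $G\equiv 0$ throughout $H_+\cap U_p\cap U_q$. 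Setting $W=\bigcup_p U_p$ over $p\in(M\setminus\{0\})$ close to the origin and defining $F|_{H_+\cap U_p}=F_p$ yields a well-defined function with the required leafwise holomorphy, continuity up to $W\cap M$, and agreement with $f$.

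The main obstacle is this uniqueness on overlaps: one must ensure that the local Hill-Taiani extensions, constructed somewhat abstractly in the proof of Theorem~\ref{thm:extCR}, actually coincide wherever they overlap. This reduces to a boundary uniqueness statement for holomorphic functions on each leaf that vanish on a hypersurface portion of the boundary with a positive Levi eigenvalue, which is handled by the Cauchy integral formula on attached analytic discs in the spirit of \S\ref{section:affinedisks}, together with the leafwise topology from Proposition~\ref{prop:geometryofleaves}.
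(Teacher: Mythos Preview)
Your proposal is correct and follows essentially the same route as the paper: verify that the Levi form of each leaf $(M)_{\{s\}}$ has a positive eigenvalue toward $H_+$ at every CR point near the origin (the paper argues by perturbation of $A$, you by Cauchy interlacing), apply Theorem~\ref{thm:extCR}(i) locally, and patch via uniqueness of the one-sided extension. The paper dispatches the patching in a single sentence (``The extension is unique and so near CR points of $M$ it can be patched together''); your more detailed justification is fine, though the appeal to Proposition~\ref{prop:geometryofleaves} is not really needed here---that proposition concerns the global topology of $(H_+\cap U)_{\{s\}}$ for a fixed neighborhood of the origin, whereas on the small overlaps $U_p\cap U_q$ local boundary uniqueness of holomorphic functions on each leaf already suffices.
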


\begin{proof}
Let $M$ be defined by $0 = - s + A(z,\bar{z}) + B(z,z) +
\overline{B(z,z)}  + E(z,\bar{z})$.
We use the same function to define $(M)_{\{s\}}$.
For points $(z,s)$ near the origin,
the Levi-form of $(M)_{\{s\}}$ is a small
perturbation of the form $A$ restricted to the $T^{(1,0)}_{(z,s)} (M)_{\{s\}}$.
As $A$ has two positive eigenvalues, then the Levi-form of
$(M)_{\{s\}}$ must have at least one positive eigenvalue.  This eigenvalue
corresponds to the side $(H_+)_{\{s\}}$.  Therefore
for some neighborhood of the origin, we can apply
Theorem~\ref{thm:extCR} near all CR points of $M$.
The extension is unique and so near CR points of $M$ it can be patched together.
In other words, shrinking $M$ and $H_+$ to a smaller neighborhood of the
origin if needed, we have an extension to some neighborhood of $M \setminus
\{ 0 \}$ in $H_+$.
That is,
we know that there are a neighborhood $W$ of $M \setminus \{ 0 \}$,
and an $F$
defined on $W \cap H_+$, holomorphic along leaves, and continuous along
leaves up to $M$.
\end{proof}

Extension for $s < 0$ follows using the affine
discs; however, for certain points where $s > 0$,
it is necessary use different
complex manifolds with boundary attached to $M$.

\begin{lemma} \label{lemma:extanuli}
Suppose $M$ and $H_+$ are defined near the origin
by \eqref{eq:basicnormdiagA} and \eqref{eq:basicnormH}, $a \geq 2$,
and $Q$ is nondegenerate. Let $W$ be as in Lemma~\ref{lemma:extW}.

Then there exists a neighborhood $U$ of the origin
with the following property.
Each point $(z,s) \in U$, where $s > 0$ and $z_3 = \dots = z_n = 0$,
is connected via a path in $(U \cap H_+ \setminus M)_{\{s\}}$ to a point
$(z',s) \in W$.

Additionally, if $f \colon M \to \C$ is a $C^\infty$ function
that is CR outside the origin, then
$F$ extends in $\C^n \times \{ s \}$
by analytic continuation along the above paths.
\end{lemma}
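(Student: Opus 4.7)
The plan is to shrink $U$ using Proposition~\ref{prop:geometryofleaves} to obtain topological control of the leaves, construct the paths by exploiting connectedness, and then invoke classical analytic continuation together with the monodromy theorem.

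First I would shrink $U$ so that Proposition~\ref{prop:geometryofleaves} applies, so that $F$ from Lemma~\ref{lemma:extW} is defined on $W \cap H_+ \cap U$, and so that $W \cap (H_+ \cap U)_{\{s\}}$ is nonempty whenever $(M \cap U)_{\{s\}}$ is. For any $s > 0$ with $(H_+ \cap U)_{\{s\}}$ nonempty, Proposition~\ref{prop:geometryofleaves} tells me that this leaf is a connected and simply connected manifold with connected boundary. Since a manifold with boundary deformation retracts onto its interior, the open set $(H_+ \setminus M \cap U)_{\{s\}} \subset \C^n \times \{s\}$ is itself connected and simply connected.

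For the first assertion, fix $(z,s) \in U$ with $s > 0$ and $z_3 = \cdots = z_n = 0$. Using $a \geq 2$ together with $E \in O(3)$, after shrinking $U$ the point $(z,s)$ lies in the open interior $(H_+ \setminus M \cap U)_{\{s\}}$ whenever $s$ strictly exceeds the relevant quadratic expression; if $(z,s)$ instead lies on $M$ the statement is vacuous. Since $(H_+ \setminus M \cap U)_{\{s\}}$ is open, connected, and meets the neighborhood $W$ of $M \setminus \{0\}$, I can join $(z,s)$ to some point $(z',s) \in W$ by a continuous path inside this open set.

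For the second assertion, I start from the germ of $F(\cdot,s)$ at $(z',s) \in W$, which is holomorphic in $z$ near $(z',s)$ by Lemma~\ref{lemma:extW}. I propagate this germ by analytic continuation along the path using a chain of polydiscs contained in the leaf; this is possible since $(H_+ \setminus M \cap U)_{\{s\}}$ is an open subset of $\C^n \times \{s\}$, so every point on the compact path admits a small polydisc neighborhood in the leaf. The resulting germ at $(z,s)$ is independent of the chosen path by the monodromy theorem, since the leaf is simply connected. The main obstacle is precisely this topological requirement: it fails, for instance, for $s < 0$ when $b = 2$, which is why this extension scheme works only for $s > 0$ with $z_3 = \cdots = z_n = 0$, while the case $s \leq 0$ must be handled separately via the affine discs of Section~\ref{section:affinedisks}.
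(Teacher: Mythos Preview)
There is a genuine gap in your argument for the second assertion. Analytic continuation along a path is not a mechanism for \emph{producing} an extension; it presupposes that at each point of the path some local holomorphic function is already available to continue into. Your sentence ``this is possible since $(H_+ \setminus M \cap U)_{\{s\}}$ is an open subset of $\C^n \times \{s\}$, so every point on the compact path admits a small polydisc neighborhood in the leaf'' conflates two different things: covering the path by polydiscs (a purely topological fact) and having the germ of $F$ extend holomorphically to each such polydisc (the entire analytic content of the lemma). A priori $F$ is only defined on $W \cap H_+$, and there is no reason the Taylor series of $F$ at a point of $W$ converges on a polydisc reaching outside $W$. The monodromy theorem only becomes relevant \emph{after} one has shown that local extensions exist everywhere along the path; it then guarantees single-valuedness.

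The paper's proof is substantially more work. It reduces to $n=2$, foliates the leaf by the level curves $X_t = \{g=t\}$ of $g(\zeta)=\lambda_1\zeta_1^2+\lambda_2\zeta_2^2$, and shows that $Y_t = X_t \cap (H_+)_{\{s\}}$ is a relatively compact one-dimensional complex submanifold with $\partial Y_t \subset (M)_{\{s\}}$. For $\Re t$ large, $Y_t$ is empty, so for some $t$ it lies entirely in $W$; one then moves $t$ toward $t_0=g(z)$ and uses the Cauchy integral on a rational parametrization of $X_t$ to \emph{construct} the extension across the first $Y_{t_1}$ that escapes $W$. The remaining set $\{g=0\}$ is handled by Hartogs. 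In short, the extension is obtained by an explicit Kontinuit\"atssatz-type construction with attached analytic curves, not by an abstract continuation argument.
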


\begin{proof}
Without loss of generality it is enough to consider $n=2$. Let 
$M \subset \C^2 \times \R$ be given by
\begin{equation}
s = \abs{z_1}^2 + \abs{z_2}^2 + 2\Re\bigl(\lambda_1z_1^2 +
\lambda_2z_2^2\bigr) + E(z,\bar{z}) .
\end{equation}
We suppose that $s > 0$.

If $\lambda_1 = \lambda_2 = 0$, then the set $(H_+)_{\{s\}}$ is a ball of
radius $\sqrt{s}$,
and the extension follows by the standard Hartogs extension theorem.
So suppose that at least one $\lambda_j$ is not zero.

Let $(z,s) \in H_+ \setminus M$ be some fixed point, where
$\lambda_1z_1^2 + \lambda_2z_2^2 \not= 0$.
We work in $\C^2$, that is on one fixed leaf.
Let
\begin{equation}
g(\zeta_1,\zeta_2) = \lambda_1\zeta_1^2 + \lambda_2\zeta_2^2.
\end{equation}
We consider the one dimensional submanifolds
$X_t \subset \C^2$ given by $g(\zeta) = t$ for nonzero $t$.
Let $t_0 = g(z)$ (note that $t_0 \not= 0$).

Let us consider the set of points on $X_t$ that correspond
to $H_+ \setminus M$, that is let us
consider the set of points where $\zeta \in X_t$ and
\begin{equation}
s > \snorm{\zeta}^2 + 2\Re g(\zeta) + E(\zeta,\bar{\zeta}) .
\end{equation}
Let us call this set of points $Y_t \subset X_t$.
In other words
\begin{equation} \label{eq:ineqanuli}
Y_t = \{\zeta\in X_t\,:\,s-2\Re t >
\snorm{\zeta}^2 + E(\zeta,\bar{\zeta}) \}.
\end{equation}

As $E(\zeta,\bar{\zeta})$ is $o(\snorm{\zeta}^2)$, we pick
a neighborhood $U$ in which 
\begin{equation}
\abs{E(\zeta,\bar{\zeta})} < \frac{1}{2}\snorm{\zeta}^2 .
\end{equation}
Then \eqref{eq:ineqanuli} implies
\begin{equation}
2(s-2\Re t) > \snorm{\zeta}^2 .
\end{equation}
We obtain that $Y_t$ is a relatively compact subset of $X_t$.
Denote by $\partial Y_t$ the relative boundary of $Y_t$ in $X_t$.
If the neighborhood
$U$ is picked small enough, that is if $(z,s)$ is picked to be
close enough to the origin
then the boundary $\partial Y_{t_0}$ lies on $M_{\{s\}}$,
and it also lies on
$M_{\{s\}}$ (or is empty)
for all $t$ such that $\Re t \geq \Re t_0$. 

Therefore we find a continuous family of $Y_t$ such that
$\partial Y_t \subset M_{\{s\}}$,
and as we
move $t$ to make $\Re t$ larger, $Y_t$ must be empty when $2(s-2\Re t)=0$.
Hence for some $t$ the set $Y_t$ is nonempty and completely within $W_{\{s\}}$.
Now move $t$
towards $t_0$ along some path. Let $t_1$ be the first such $t$ where
$Y_{t_1}$ contains at least one point
not in $W_{\{s\}}$.  Clearly such points must be in the interior and since
$W_{\{s\}}$ is a
neighborhood of $M_{\{s\}}$, $Y_{t_1} \setminus W_{\{s\}}$
is a compact set in $Y_{t_1}$.
Let $\varphi_t\colon \C\setminus \{0\} \to X_t$ be the natural rational
parametrization
of $X_t$.  For example, if $\lambda_j \not= 0$, it is
\begin{equation*}
\varphi_t(\xi) = 
\left(
\frac{\xi+\frac{t}{\xi}}{2 \sqrt{\lambda_1}},
\frac{\xi-\frac{t}{\xi}}{2i \sqrt{\lambda_2}}
\right) .
\end{equation*}
In particular, $\varphi_t$ varies analytically with $t$.
There exists a smooth path
$\Gamma \subset \varphi_{t_1}^{-1}(Y_{t_1}) \subset\C \setminus \{0 \}$
that goes exactly once around
$\varphi_{t_1}^{-1}(Y_{t_1} \setminus W_{\{s\}})$.
We apply the Cauchy formula on $X_t$ for $t$ slightly before
getting to $t_1$ (in the sense of moving towards $t_0$),
so that $\Gamma \subset \varphi_t^{-1}(Y_t)$, and $z \in Y_t$:
\begin{equation}
F(z,s) = \frac{1}{2\pi i} \int_{\Gamma}
\frac{F\bigl(\varphi_{t}(\tau)\bigr)}{\tau-\varphi_{t}^{-1}(z)} \, d\tau.
\end{equation}

The formula therefore holds for $t$ in a neighborhood of $t_1$,
and $z \in Y_t \cap W_{\{s\}}$, and hence we define an extension of $F$
in all of $Y_t$.

Hence we have the required extension $F$ to all points $(z,s)$
in some neighborhood $U$ except
perhaps points where either $g(z) = 0$.
We find an extension $F$
into an open set minus the subvariety given by $g(z) = 0$.  However, we
notice that this subvariety, which is a union of two complex lines through
the origin, must in fact intersect $H_+$ only in a bounded set, which is
clear from \eqref{eq:ineqanuli}.  Thus the set to which we did not yet
extend $F$ is a compact subset of the leaf and we use the standard Hartogs
extension phenomenon.
\end{proof}

\begin{lemma} \label{lemma:extallbutzero}
Suppose $M$ and $H_+$ are defined near the origin
by \eqref{eq:basicnormdiagA} and \eqref{eq:basicnormH}, $a \geq 2$,
and $Q$ is nondegenerate.

Then there exists a neighborhood $U$ of the origin with the following property.
If $f \colon M \to \C$ is $C^\infty$ and CR outside the origin,
then there is a function $F \in C^\infty(H_+ \cap U \setminus \{ 0 \})$
such that
$F$ is CR on $(H_+ \setminus M) \cap U$ and $F|_{M \cap U} = f|_{M \cap U}$.
\end{lemma}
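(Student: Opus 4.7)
The plan is to combine the partial extensions from Lemmas~\ref{lemma:extW} and \ref{lemma:extanuli} by filling in the remaining interior points via the affine analytic discs of Lemma~\ref{lemma:affinedisks}. Shrink the neighborhood $U$ so that all three lemmas apply simultaneously, with $W$ as in Lemma~\ref{lemma:extW} and with $U \subset U_1$ as in Lemma~\ref{lemma:affinedisks}. This gives a function $F$, holomorphic on each leaf and agreeing with $f$ on $M$, defined on $W \cap H_+$ and, after applying Lemma~\ref{lemma:extanuli}, also on a neighborhood of the ``bad'' slice $\{(z,s) \in H_+ : s > 0,\ z_3 = \cdots = z_n = 0\}$.

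For any $(z,s) \in (H_+ \setminus M) \cap U$ not already covered, apply Lemma~\ref{lemma:affinedisks} to obtain an affine map $L \colon \C \to \C^n \times \{s\}$ with $L(0) = (z,s)$ meeting $M$ transversally along a single compact smooth curve consisting only of CR points. Define
\begin{equation*}
F(z,s) := \frac{1}{2\pi i} \int_{L^{-1}(M\cap U_2)} \frac{f\bigl(L(\tau)\bigr)}{\tau}\, d\tau .
\end{equation*}
To show consistency with the existing extensions and independence of the choice of disc, invoke the continuous family $L_t$ from Lemma~\ref{lemma:affinedisks}: the corresponding Cauchy integrals depend continuously on $t$ and yield a holomorphic function on each disc $L_t(\C) \cap H_+$ with boundary values $f$, which by uniqueness of holomorphic extension along a leaf must agree with $F$ wherever $F$ has already been defined. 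The two alternatives at $t=0$ are exactly the situations in which the disc either shrinks into $W$ or meets a disc through $(0,s)$ of the type handled by Lemma~\ref{lemma:extanuli}; in both cases the consistency check reduces to an already-constructed extension.

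Once $F$ is unambiguously defined on all of $(H_+ \setminus M) \cap U$ and holomorphic on each leaf, interior smoothness is immediate from Lemma~\ref{lemma:intreg} applied to the affine disc through each interior point. Smoothness up to $M\setminus\{0\}$ follows from Theorem~\ref{thm:extCR}, which applies at every point of $M \cap U \setminus \{0\}$ since Proposition~\ref{prop:isolgen} guarantees these are all CR points. The principal obstacle is verifying the consistency of the various definitions of $F$; this rests on uniqueness of holomorphic extension along each leaf $(H_+)_{\{s\}}$, which in turn relies on the topological picture in Proposition~\ref{prop:geometryofleaves} --- in particular, on the fact that in the one case where $\pi_1$ is nontrivial the generator lies in $M$, where $f$ is single-valued, so analytic continuation around it is trivial.
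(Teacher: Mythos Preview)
Your overall architecture matches the paper's: extend first to $W$ via Lemma~\ref{lemma:extW}, handle the special slice via Lemma~\ref{lemma:extanuli}, fill in the rest by pushing the affine discs of Lemma~\ref{lemma:affinedisks}, check single-valuedness with Proposition~\ref{prop:geometryofleaves}, and finish regularity with Lemma~\ref{lemma:intreg} and Theorem~\ref{thm:extCR}.  Two points, however, need repair.

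\textbf{The Cauchy integral does not a priori have boundary values $f$.}  You define
\[
F(z,s)=\frac{1}{2\pi i}\int_{L^{-1}(M\cap U_2)}\frac{f(L(\tau))}{\tau}\,d\tau
\]
and then assert that the corresponding integrals along the family $L_t$ ``yield a holomorphic function on each disc $L_t(\C)\cap H_+$ with boundary values $f$.''  That last clause is exactly what is at stake: the Cauchy transform of a smooth $f$ on a curve has boundary values $f$ from the inside if and only if $f$ already extends holomorphically to the inside, i.e., if and only if all the moments $\int f(L(\tau))\tau^k\,d\tau$ vanish.  At $t=0$ this holds because the disc lies in $W$ (or in the set handled by Lemma~\ref{lemma:extanuli}), but nothing you wrote forces it to persist for $t>0$.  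The paper avoids this circularity by invoking the Kontinuit\"atssatz: one works with slightly shrunk discs whose boundaries lie in the \emph{open} set $(W\cap H_+\setminus M)_{\{s\}}$, where $F$ is already holomorphic, and then the continuity principle propagates the extension of $F$ (not of $f$) along the family.  Your Cauchy integral of $f$ over the $M$-boundary does equal the Cauchy integral of $F$ over such a shrunk curve (by Cauchy's theorem in the annulus), so your formula is correct once the Kontinuit\"atssatz has been applied---but the justification has to come from the latter, not from the asserted boundary behavior.

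\textbf{The leaf $s=0$ needs separate treatment.}  Lemma~\ref{lemma:affinedisks} furnishes the continuous family $L_t$ only for $s\neq 0$; for $s=0$ you get a single disc but no deformation.  The paper handles this by a continuity-in-$s$ argument: define the extension on a shrunk disc $D_0$ on the $s=0$ leaf via the Cauchy integral, then compare with the (already constructed) $F$ on the nearby discs $D_s$ for $s\neq 0$ to see that the two agree where both are defined.  Your write-up folds $s=0$ into the general case without comment, and the consistency check you give (via $L_t$) does not apply there.
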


\begin{proof}
We apply Lemma~\ref{lemma:extW} to obtain $W$ and define $F$ in $W$,
and we also apply Lemma~\ref{lemma:extanuli} to show that we
can analytically extend $F$ to all points $(z,s)$ where $s > 0$
and $z_3=\dots=z_n = 0$.

We must show that for some neighborhood $U$, for each $s$
and for every point $z$ in $(U \cap H_+ \setminus M)_{\{s\}}$,
we can extend via analytic continuation from some point
$w \in (U \cap W)_{\{s\}}$.
We then show that this extension is unique, as long as $U$ is small
enough.

We find a small enough neighborhood $V$ where
Proposition~\ref{prop:geometryofleaves} applies, and we assume that
$M$ and $H_+$ are closed submanifolds of $V$.

Let us suppose
that $s \not=0$.  Via Lemma~\ref{lemma:affinedisks} there are neighborhoods
$U_1$ and $U_2 \subset V$
such that for any given $(z,s) \in U_1$,
we have a family of
affine maps $L_t$, with $L_1(0) = (z,s)$ and such that the image
$L_t(\C) \cap U_2 \cap H_+$
either ends up in $W$ for $t=0$, or for $t=0$ it ends up in
the set $z_3 = \dots = z_n = 0$.
If the images of $L_t$ end up in $W$, then we may apply the
Kontinuit\"{a}tssatz (see, e.g., \cite{Shabat:book}*{page 189}).
As there exists a holomorphic function in a
neighborhood of 
$(L_0(\C) \cap U_2 \cap H_+ \setminus M)_{\{s\}}$ in $\C^n$,
there exists a holomorphic function in a neighborhood of 
$(L_1(\C) \cap U_2 \cap H_+ \setminus M)_{\{s\}}$.
Hence the function extends via analytic continuation to $(z,s)$.
We let $U = U_1$.

We need to now show that the extension is single valued.
We pick $U$ such that Proposition~\ref{prop:geometryofleaves}
applies there.
Each component (possibly
two) of $(U \cap H_+)_{\{s\}}$ is a submanifold with boundary whose boundary
(the submanifold $M_{\{s\}}$) is connected.
The proposition also says that $(U \cap H_+)_{\{s\}}$ and therefore
$(U \cap H_+ \setminus M)_{\{s\}}$ is either simply
connected, or the fundamental group is $\mathbb{Z}$ whose generator is a
loop in the boundary $U \cap M_{\{s\}}$.  If the neighborhood is simply
connected the extension $F$ is single valued.  If it is not simply
connected, we push the generator of the fundamental group
from the boundary into
$(U \cap H_+ \setminus M)_{\{s\}}$, but such that it still stays inside $W$.
The function is therefore already single valued on the generator of the
fundamental group.  Therefore, the extension is single valued in
$(U \cap H_+)_{\{s\}}$.

Finally suppose that $s = 0$.
The first part of Lemma~\ref{lemma:affinedisks} applies
in $U$ even for $s=0$.  So if
$(z,0) \in (H_+ \setminus M) \cap U$, then there is an affine disc
through $(z,0)$ attached to $M_{\{0\}}$ and not
attached to the CR singularity.
The boundary of this disc falls into $W$, that is we have an
extension near the
boundary of this disc.

As an extension exists in a neighborhood of the CR points,
even on the $s=0$ leaf,
we take a slightly smaller disc $D_0$ such that
$\partial D_0 \subset (W \cap H_+ \setminus M)_{\{0\}}$,
that is $F$ is holomorphic in a neighborhood of
$\partial D_0$.  Let $D_s$ be the identical disc, but 
on the $s$-leaf rather than the $0$ leaf.
For small $s \not = 0$,
$\partial D_s \subset (U \cap H_+ \setminus M)_{\{s\}}$,
and we know that $F$ was extended above to the entire disc
$\overline{D_s}$.
Denote by $L_0$ and $L_s$ the respective
affine functions.

We define an extension $\tilde{F}$ in $D_0$ via
the Cauchy integral formula:
\begin{equation}
\tilde{F}(z,0) = \frac{1}{2\pi i} \int_{L_0^{-1}(\partial D_0)}
\frac{F\bigl(L_0(\tau)\bigr)}{\tau-L_0^{-1}(z)} d\tau
\end{equation}
As $F$ on $D_s$ is given by
\begin{equation}
F(z,s) =  \frac{1}{2\pi i} \int_{L_s^{-1}(\partial D_s)}
\frac{F\bigl(L_s(\tau)\bigr)}{\tau-L_s^{-1}(z)} d\tau
\end{equation}
by continuity, $\tilde{F}=F$ where both are defined.  Hence $F$
extends through the $s=0$ leaf in some neighborhood.

We have an extension $F$ defined in
$H_+ \cap U$ for some small enough neighborhood $U$
of the origin.
Regularity of $F$ the function in $(H_+ \setminus M) \cap U$
follows via Lemma~\ref{lemma:intreg}.
Regularity near the CR points of
$M$, that is regularity on $(H_+ \setminus \{0\}) \cap U$
follows from Theorem~\ref{thm:extCR}.
\end{proof}


\section{Formal extension at a CR singularity} \label{section:formal}

The formal extension in \cite{crext2} is stated for a nondegenerate $A$;
however, it also works as long as $A$ has at least two positive eigenvalues
and $Q$ is nondegenerate.  For completeness we give the statement
in the notation that we need and a sketch of the proof.

\begin{lemma}\label{lem:PolyExtn}
Suppose $M \subset \C^{n} \times \R$, $n \geq 2$, given by
\begin{equation} \label{eq:basicmodel}
M: \quad s = \sum_{j=1}^a \sabs{z_j}^2 - \sum_{j=a+1}^{a+b} \sabs{z_j}^2 + B(z,z)
+ \overline{B(z,z)} ,
\end{equation}
$a \geq 2$, and $Q$ is nondegenerate.
Suppose $f(z,\bar{z})$ is a polynomial such that when considered as a
function on $M$ (parametrized by $z$), $f$ is a CR function on
$M_{CR}$.

Then there exists a polynomial $F(z,s)$ such that $f$ and $F$
agree on $M$, that~is,
\begin{equation}
f(z,\bar{z}) = F\bigl(z, A(z,\bar{z}) + B(z,z) + \overline{B(z,z)} \bigr) .
\end{equation}
Furthermore, if $f$ is homogeneous of degree $d$,
then $F$ is weighted homogeneous of degree $d$, that is,
\begin{equation}
F(z,s)=\sum\limits_{j+2k=d}\, P_j(z)s^k
\end{equation}
where $P_j$ is a homogeneous polynomial of degree $j$.
\end{lemma}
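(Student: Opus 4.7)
The plan is to translate the CR condition into a polynomial divisibility relation and then induct on total degree. First, I would convert ``CR on $M_{CR}$'' into polynomial identities. At each CR point, the antiholomorphic tangent directions are the vectors $(c_1,\ldots,c_n) \in \C^n$ satisfying $\sum_j c_j Q_{\bar z_j} = 0$, so the CR condition forces the row $(\partial_{\bar z_1}f,\ldots,\partial_{\bar z_n}f)$ to be $\C$-parallel to $(Q_{\bar z_1},\ldots,Q_{\bar z_n})$ at every CR point; equivalently all $2\times 2$ minors vanish there. These minors are polynomials in $(z,\bar z)$, and since $M_{CR}$ is a Zariski dense subset of the totally real diagonal $\{w=\bar z\}\subset\C^{2n}$, the minors vanish identically as elements of $\C[z,\bar z]$, giving
\[
(\partial_{\bar z_j} f)\,Q_{\bar z_k} = (\partial_{\bar z_k} f)\,Q_{\bar z_j} \quad \text{in } \C[z,\bar z], \quad 1\le j,k\le n.
\]
Each such identity is total-degree homogeneous, so the homogeneous components of $f$ are individually CR and it suffices to prove the ``furthermore'' clause.

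Next I would exploit the nondegeneracy of $Q$ to produce a common polynomial quotient. Nondegeneracy of $Q$ forces the $2n\times 2n$ complex Hessian of $Q$ in the variables $(z,\bar z)$ to be invertible; in particular the bottom $n$ rows, which are exactly the coefficient vectors of the linear forms $Q_{\bar z_1},\ldots,Q_{\bar z_n}$, are $\C$-linearly independent, so these $n$ forms are pairwise nonassociate and hence pairwise coprime in the UFD $\C[z,\bar z]$. From the cross identities, $Q_{\bar z_j}$ divides $(\partial_{\bar z_j}f)\,Q_{\bar z_k}$, and coprimality with $Q_{\bar z_k}$ for $j\ne k$ yields $Q_{\bar z_j}\mid\partial_{\bar z_j}f$; writing $\partial_{\bar z_j}f = Q_{\bar z_j}\,h_j$ and substituting back shows all $h_j$ agree, producing a single $h\in\C[z,\bar z]$ with $\partial_{\bar z_j}f = Q_{\bar z_j}\,h$ for every $j$. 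Differentiating this relation and using the symmetry $Q_{\bar z_j\bar z_k} = Q_{\bar z_k\bar z_j}$ gives $Q_{\bar z_j}\,\partial_{\bar z_k}h = Q_{\bar z_k}\,\partial_{\bar z_j}h$, so $h$ satisfies the same cross identities and is itself CR; when $f$ is homogeneous of total degree $d$, $h$ is homogeneous of degree $d-2$. The main obstacle in the whole argument is exactly this step: without the full nondegeneracy of $Q$ (rather than only of its Hermitian part $A$), the $Q_{\bar z_j}$ could fail to be coprime and the single polynomial quotient $h$ need not exist.

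Finally I would close by strong induction on total degree $d$. For $d\le 1$, linear independence of the $Q_{\bar z_j}$ forces every constant $\partial_{\bar z_j}f$ to vanish, so $f\in\C[z]$ and we take $F(z,s)=f(z)$. For $d\ge 2$, the inductive hypothesis applied to $h$ (of strictly smaller degree) furnishes a polynomial $G(z,s)$, weighted homogeneous of degree $d-2$ in the homogeneous case, with $h = G(z,Q)$. Setting
\[
\tilde F(z,s) = \int_0^s G(z,t)\,dt,
\]
we compute $\partial_{\bar z_j}\tilde F(z,Q) = G(z,Q)\,Q_{\bar z_j} = h\,Q_{\bar z_j} = \partial_{\bar z_j}f$, so $f - \tilde F(z,Q)$ is annihilated by every $\partial_{\bar z_j}$ and equals some polynomial $F_0(z)\in\C[z]$. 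Then $F(z,s):=\tilde F(z,s)+F_0(z)$ satisfies $f(z,\bar z) = F\bigl(z,Q(z,\bar z)\bigr)$. Integration raises the $s$-degree by one, contributing $+2$ to the weighted total degree, so $\tilde F$ is weighted homogeneous of degree $d$; and $F_0$ inherits homogeneity of degree $d$ in $z$ alone as the difference of two such pieces, yielding the required decomposition $F(z,s) = \sum_{j+2k=d} P_j(z)\,s^k$.
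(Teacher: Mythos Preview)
Your proof is correct and takes a genuinely different route from the paper's. The paper's argument is analytic and geometric: it restricts to the $2$-plane $z_3=\dots=z_n=0$ (this is where $a\ge 2$ is used, to guarantee a positive definite $A$ there), finds an elliptic direction $c\in\C^2$, builds a continuous family of affine discs attached to $M_{CR}$ shrinking to a CR point, complexifies $s$, and invokes the Kontinuit\"atssatz together with two results from the authors' earlier paper (a one-variable polynomial extension lemma and a proposition recognizing a polynomial from its restrictions to lines). Your argument is purely algebraic: you read the CR condition as the rank-one relation $(\partial_{\bar z_j}f)\,Q_{\bar z_k}=(\partial_{\bar z_k}f)\,Q_{\bar z_j}$ in $\C[z,\bar z]$, use nondegeneracy of $Q$ to see that the linear forms $Q_{\bar z_j}$ are pairwise nonassociate irreducibles, extract a common quotient $h$ with $\partial_{\bar z_j}f=Q_{\bar z_j}h$, observe that $h$ inherits the same relations, and induct on degree via an $s$-antiderivative.

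What each approach buys: the paper's disc argument ties the lemma into the same machinery used elsewhere in the article (and in the companion papers), so it is conceptually unified with the analytic extension results. Your argument is self-contained, entirely elementary, and yields the weighted homogeneity statement directly from the induction rather than as an afterthought. It is also strictly more general: you never use the hypothesis $a\ge 2$; your proof needs only $n\ge 2$ and the nondegeneracy of $Q$ (to make the complex Hessian invertible and hence the $Q_{\bar z_j}$ pairwise coprime). The paper's approach genuinely requires $a\ge 2$ to produce the elliptic $2$-plane, so your method establishes the polynomial extension lemma under a weaker hypothesis than stated.
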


The proof is a combination of the proofs of 
Lemma~3.2 and Lemma~3.3 from \cite{crext2}.  The main difficulties
in \cite{crext2} are 1) handling the case where $A$ has one positive and
one negative eigenvalue, and 2) handling the case where the
CR singularity is large.
In the case of the above lemma, neither issue arises.

\begin{proof}[Sketch of Proof]
Restricting to $z_3 = \dots = z_n = 0$, we find a submanifold in
$\C^2 \times \R$ with positive definite $A$.
Let us therefore for the moment consider
$M' \subset \C^2 \times \R$ given by
\begin{equation}
M': \quad s = \sabs{z_1}^2 + \sabs{z_2}^2 + B'(z,z) + \overline{B'(z,z)} .
\end{equation}
In \cite{crext2} we noted
that there exists a so-called elliptic direction $c \in \C^2$,
in particular,
for $s > 0$ 
the image of the map
\begin{equation}
\xi \mapsto (c \xi,s)
\end{equation}
intersects $M'$ in an ellipse and therefore induces an analytic disc.
Let us call this disc $\Delta_{c,s}$.
We will again apply the
Kontinuit\"atssatz but in this case we are allowed to move between
leaves as all the data ($M$ and $f$) is analytic (in fact polynomial).

For small $v \in \C^2$
\begin{equation}
\xi \mapsto (c \xi+v,s)
\end{equation}
still meets $M'$ in an ellipse, and thus induces a disc
$\Delta_{c,s,v}$ attached to $M'$.  Fixing such a $v$ and
letting $s$ go to zero, we find that eventually
$\Delta_{c,s,v}$
must become empty.  Therefore we find a family of affine
analytic discs attached to $M'$ that shrink down to a point on $M'$
that is not the origin.

We return to our original $M \subset \C^n \times \R$.
We abuse notation somewhat by writing $c$ instead of $(c,0) \in \C^n$,
and we still write $\Delta_{c,s} \subset \C^n \times \R$ for the analytic
disc above.

We have a family of analytic discs attached to $M_{CR}$
where one end of the family is a disc, $\Delta_{c,s}$,
through a point $(0,s)$
and the other end of the family shrinks to point on $M_{CR}$.

We now complexify $s$ to consider $M$ as a subset of $\C^{n+1}$.
As $M$ and $f$ are analytic and $f$ is CR it extends to a holomorphic
function of a neighborhood of $M_{CR}$ in $\C^{n+1}$.
The Kontinuit\"atssatz therefore implies that $f$ extends to a holomorphic
function of a neighborhood of $\overline{\Delta_{c,s}}$.  As
the discs are always attached to $M_{CR}$ we find that the extension agrees
with $f$ on the intersection $\overline{\Delta_{c,s}} \cap M$.

Note that $f$ therefore extends to a holomorphic function for a
neighborhood of discs $\overline{\Delta_{c',s}}$ for all $c' \in C \subset \C^n$
in a neighborhood $C$ of $c$.

Let $M_{c'} \subset \C \times \C$ be the manifold
defined by the pullback
\begin{equation}
(\xi,s) \mapsto (\xi c', s)
\end{equation}
In \cite{crext}*{Lemma 5.1}, we proved that a polynomial $P(\xi,\bar{\xi})$,
when considered as
a function on the manifold $M_{c'}$ parametrized by $\xi$,
extends to a polynomial in $\xi$ and $s$.

Thus for each $c'$ in $C$ we find a polynomial $F_{c'}(\xi,s)$
which extends $f(c' \xi, \overline{c' \xi})$.
Consider for a moment $\Delta_{c,1}$.
There exists a holomorphic function $F(z,s)$
defined in a neighborhood $\Delta_{c,1}$ that extends $f$.
For all $c' \in C$,
$F(c'\xi, s) = F_{c'}(\xi,s)$ on an open set, and since $C$
is an open set, then  $F(z,s)$ agrees with a polynomial on an open set.
See \cite{crext}*{Proposition 5.2}.
\end{proof}

The lemma implies the existence of a formal power series for an extension.
The following proposition and its proof is essentially the same as
Proposition 5.1 from \cite{crext2} with the necessary modifications
made for smooth functions rather than real-analytic functions.

\begin{prop}\label{prop:FormalPowerSeries}
Let $M \subset \C^{n} \times \R$, $n\ge 2$, be a smooth submanifold given by
\eqref{eq:basicnormdiagA}, $Q$ nondegenerate, $a \geq 2$,
and write \eqref{eq:basicnormdiagA}
as $s = \rho(z,\bar{z})$.

Suppose $f \in C^{\infty}(M)$
such that $f|_{M_{CR}}$ is a CR function.
There exists a formal power series $F(z,s)$ for $f$ at the origin,
that is, $F\bigl(z,\rho(z,\bar{z})\bigr) = f(z,\bar{z})$ formally at the
origin (parametrizing $M$ by $z$).
\end{prop}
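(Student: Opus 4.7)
The plan is to construct the formal series $F(z,s)$ inductively in the weighted degree that assigns $z_j$ weight $1$ and $s$ weight $2$, producing one weighted-homogeneous piece $F_N(z,s)$ at each stage via Lemma~\ref{lem:PolyExtn}. Let $\tilde f(z,\bar z)=\sum_{d\ge 0}\tilde f_d(z,\bar z)$ denote the formal Taylor expansion of $f$ at the origin, with $\tilde f_d$ bihomogeneous of total degree $d$ in $(z,\bar z)$, and write $\rho=Q+E$ with $E\in O(3)$. A crucial observation is that if $F_d(z,s)$ is weighted-homogeneous of degree $d$, then $F_d(z,\rho(z,\bar z))=F_d(z,Q(z,\bar z))+O(d+1)$, with $F_d(z,Q)$ exactly homogeneous of degree $d$; this is what lets the scheme close at a single degree.

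For the inductive step, assume $F_0,\dots,F_{N-1}$ have been built so that $\Phi^{<N}(z,s):=\sum_{d<N}F_d(z,s)$ satisfies $\Phi^{<N}\bigl(z,\rho(z,\bar z)\bigr)-\tilde f(z,\bar z)\in O(N)$, and define the formal series $\tilde h:=\tilde f-\Phi^{<N}(z,\rho)$, whose lowest nonvanishing bihomogeneous component $g_N$ has degree $N$. I will use the natural tangential $(0,1)$-vector fields $\bar L_{k\ell}:=\rho_{\bar z_\ell}\partial_{\bar z_k}-\rho_{\bar z_k}\partial_{\bar z_\ell}$, which are automatically tangent to $M$ because $\bar L_{k\ell}\rho=0$. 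The chain rule yields $\bar L_{k\ell}\bigl(\Phi^{<N}(z,\rho)\bigr)=\partial_s\Phi^{<N}(z,\rho)\cdot \bar L_{k\ell}\rho=0$, so $\Phi^{<N}(z,\rho)$ is formally CR on $M$. Since $f$ is CR on $M_{CR}$ and $\bar L_{k\ell}$ vanishes at the isolated CR singularity at the origin (all first derivatives of $\rho$ vanish there), $\bar L_{k\ell}\tilde h=0$ as a formal power series in $(z,\bar z)$.

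The central claim is that $g_N$ is CR on $(M^{quad})_{CR}$. Decompose $\bar L_{k\ell}=\bar L_{k\ell}^{quad}+R_{k\ell}$, where $\bar L_{k\ell}^{quad}:=Q_{\bar z_\ell}\partial_{\bar z_k}-Q_{\bar z_k}\partial_{\bar z_\ell}$ is the analogous CR vector field on the quadric model and $R_{k\ell}$ has coefficients in $O(2)$ because $E\in O(3)$. The operator $\bar L_{k\ell}^{quad}$ preserves total degree (linear coefficients, $\bar z$-derivative lowering by one), while $R_{k\ell}$ strictly raises degree by at least one. Consequently the degree-$N$ piece of $0=\bar L_{k\ell}\tilde h=\bar L_{k\ell}(g_N+h_{N+1}+\cdots)$ reduces to $\bar L_{k\ell}^{quad}g_N$, forcing this to vanish identically as a polynomial in $(z,\bar z)$. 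Since the $\bar L_{k\ell}^{quad}$ span $T^{(0,1)}M^{quad}$ at generic CR points, the polynomial $g_N$ is CR on $(M^{quad})_{CR}$. Lemma~\ref{lem:PolyExtn} then produces a weighted-homogeneous polynomial $F_N(z,s)$ of degree $N$ with $F_N(z,Q(z,\bar z))=g_N(z,\bar z)$; adding $F_N$ to $\Phi^{<N}$ sharpens the remainder to $O(N+1)$, closing the induction. The resulting formal series $F(z,s):=\sum_d F_d(z,s)$ then satisfies $F\bigl(z,\rho(z,\bar z)\bigr)=\tilde f(z,\bar z)$ degree by degree.

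The main obstacle is the central claim: the degree-by-degree decoupling that identifies the leading part of $\bar L_{k\ell}\tilde h$ with $\bar L_{k\ell}^{quad}g_N$. This is precisely what the assumption $E\in O(3)$ and the weighted-homogeneity bookkeeping are designed to deliver; with only $E\in O(2)$ the lowest-order terms would mix $\bar L_{k\ell}^{quad}g_N$ with data from lower-degree pieces of $\tilde h$, and the induction would not close cleanly.
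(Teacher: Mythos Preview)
Your proof is correct and follows essentially the same approach as the paper: you use the same tangential $(0,1)$ vector fields $\rho_{\bar z_\ell}\partial_{\bar z_k}-\rho_{\bar z_k}\partial_{\bar z_\ell}$, split off their quadric part $Q_{\bar z_\ell}\partial_{\bar z_k}-Q_{\bar z_k}\partial_{\bar z_\ell}$, use the $O(3)$ vanishing of $E$ to isolate the lowest-degree homogeneous piece as a CR function on $M^{quad}$, invoke Lemma~\ref{lem:PolyExtn}, subtract, and iterate. Your write-up is somewhat more explicit about the weighted-degree bookkeeping (in particular the observation $F_N(z,\rho)=F_N(z,Q)+O(N+1)$) than the paper's, but the argument is the same.
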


\begin{proof}
Write $M$ as
\begin{equation}
s = Q(z,\bar{z}) + E(z,\bar{z}) ,
\end{equation}
where $E$ is $O(3)$ and $Q(z,\bar{z}) = A(z,\bar{z}) + B(z,z) +
\overline{B(z,z)}$.
Parametrizing $M$ by $z$, decompose $f$ as
\begin{equation}
f(z,\bar{z})
= f_k(z,\bar{z}) + \widetilde{f}(z,\bar{z}) ,
\end{equation}
where $f_k$ is a real-homogeneous polynomial of degree $k$
and $\widetilde{f}$ is $O(k+1)$.

The rest of the proof is essentially identical to the proof of
Proposition~5.1 from \cite{crext2}.  Let us go through it quickly.
A basis of CR vector fields for CR points of $M$ near the origin is
given by vector fields of the form $X =
\left(
Q_{\bar{z}_j}
+
E_{\bar{z}_j}
\right)
\frac{\partial}{\partial \bar{z}_k}
-
\left(
Q_{\bar{z}_k}
+
E_{\bar{z}_k}
\right)
\frac{\partial}{\partial \bar{z}_j}$,
and similarly $X^{quad} =
Q_{\bar{z}_j} \frac{\partial}{\partial \bar{z}_k}
-
Q_{\bar{z}_k} \frac{\partial}{\partial \bar{z}_j}$ for $M^{quad}$.
Then
\begin{equation}
0 = Xf =
X (f_k + \widetilde{f}) =
\left(
Q_{\bar{z}_j}
\right)
\frac{\partial f_k}{\partial \bar{z}_k}
-
\left(
Q_{\bar{z}_k}
\right)
\frac{\partial f_k}{\partial \bar{z}_j}
+
O(k+1)
=
X^{quad} f_k + O(k+1).
\end{equation}
Therefore $X^{quad} f_k = 0$ and
$f_k(z,\bar{z})$ is a CR function on the model $M^{quad}$.
By Lemma~\ref{lem:PolyExtn}, we write
$f_k(z,\bar{z}) = F_k\bigl(z,Q(z,\bar{z})\bigr)$ for some
weighted homogeneous $F_k(z,s)$.
The function $F_k(z,s)$ is CR on $M$, and if we
parametrize by $z$,
$F_k\bigl(z,\rho(z,\bar{z})\bigr)$ has
the same $k$th order terms as $f$, and
\begin{equation}
f(z,\bar{z}) - F_k\bigl(z,\rho(z,\bar{z})\bigr)
\end{equation}
is a CR function on $M$ vanishing to one higher order. 
We obtain a formal power series.
\end{proof}


\section{Regularity of the extension at a CR singularity} \label{section:regularityCRsing}

In this section we prove Theorem~\ref{thm:extCRsing}.
Let $M$, $H_+$, $f$ be as in the theorem.
Lemma~\ref{lemma:extallbutzero} gives us a neighborhood
$U$ and the extension $F$.
From now on, we assume $H_+ = U \cap H_+$ and
$M = U \cap M$.
In the following, we parametrize $M$ by $z$ as usual when writing
$f(z,\bar{z})$, and we compute the partial derivatives of $f$
with respect to this $z$.

\begin{claim}\label{claim:ExtnConts}
$F \in C(H_+)$.
\end{claim}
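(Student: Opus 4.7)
The plan is as follows. By Lemma~\ref{lemma:extallbutzero}, the function $F$ is already $C^\infty$ on $(H_+ \cap U) \setminus \{0\}$, and $F|_M = f|_M$ is smooth by hypothesis, so the only new task is to verify continuity of $F$ at the origin. After subtracting the constant $f(0,0)$ I may assume $f(0,0) = 0$ and show $F(z,s) \to 0$ as $(z,s) \to 0$ in $H_+$. Approach of $(z,s)$ to $0$ along $M$ is handled immediately by continuity of $f$, so the real task is to control $|F(z,s)|$ for sequences $(z_k, s_k) \to 0$ in $H_+ \setminus M$.

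For this I will use the affine analytic discs constructed in Lemma~\ref{lemma:affinedisks}. For each $(z_k, s_k)$ near the origin, pick the affine map $L_k \colon \C \to \C^n \times \{s_k\}$ with $L_k(0) = (z_k, s_k)$ supplied by that lemma, and let $D_k$ denote the component of $L_k(\C) \cap H_+ \cap U_2$ containing $(z_k, s_k)$; its boundary $\partial D_k$ lies in the CR part of $M \cap U_2$. Lemma~\ref{lemma:extW} guarantees that, on the leaf $\C^n \times \{s_k\}$, the function $F$ is holomorphic on $D_k$ and continuous up to $\partial D_k$, where it agrees with $f$. Applying the maximum principle to the holomorphic function $F \circ L_k$ on the disc $L_k^{-1}(D_k)$ yields
\begin{equation}
|F(z_k, s_k)| \,\leq\, \sup_{(\zeta, s_k) \in \partial D_k} |f(\zeta, \bar{\zeta})|.
\end{equation}

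The only genuinely nontrivial step is to verify that $\partial D_k$ shrinks to the origin uniformly as $(z_k, s_k) \to 0$, so that the right-hand side tends to $0$ by continuity of $f$. This is read directly from the bound \eqref{estimateforPxi} used in the construction of $L_k$: the $\xi$-radius of $L_k^{-1}(D_k)$ is controlled by $|P(z_k,\bar{z}_k)| + \sqrt{2(s_k + R(z_k,\bar{z}_k) + \tfrac{1}{2}\snorm{z_k}^2)}$, which tends to $0$, while $L_k(\xi)$ differs from $(z_k, s_k)$ only through the translation $(c_1 \xi, c_2 \xi, 0, \dots, 0)$ with $|c_1|^2 + |c_2|^2 = 1$. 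Hence $\overline{D_k}$ sits inside a ball about the origin whose radius vanishes with $(z_k, s_k)$, and the continuity of $f$ at $0$ with $f(0,0) = 0$ forces the supremum above to tend to $0$. I expect this uniform-shrinking verification to be the only mildly subtle point; everything else is a direct combination of the affine-disc construction with the maximum modulus principle.
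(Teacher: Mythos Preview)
Your proposal is correct and follows essentially the same approach as the paper: pass an affine analytic disc from Lemma~\ref{lemma:affinedisks} through $(z,s)$, apply the maximum principle to bound $|F(z,s)-f(0)|$ by the oscillation of $f$ on the disc's boundary in $M$, and let $(z,s)\to 0$. Your write-up is in fact more careful than the paper's, since you explicitly extract from \eqref{estimateforPxi} that the discs shrink to the origin as $(z,s)\to 0$; one small remark is that the holomorphy of $F$ on all of $D_k$ (not just near $\partial D_k$) comes from Lemma~\ref{lemma:extallbutzero} rather than Lemma~\ref{lemma:extW}, which you already invoked at the outset.
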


\begin{proof}
The extension $F \colon H_+ \to \C$ from Lemma~\ref{lemma:extallbutzero}
is smooth (and hence continuous) on $H_+ \setminus \{ 0 \}$.
By Lemma~\ref{lemma:affinedisks},
there exists a 
small neighborhood $U' \subset U$ of the origin
such that
for each point $(z,s) \in H_+ \cap U'$
there is an analytic disk $\Delta$ through $(z,s)$ with
$\partial \Delta \subset M$.
Via the maximum principle
\begin{equation}
\begin{split}
\abs{F(z,s)-f(0)}
& \leq
\sup\Big\{\abs{f(\zeta,\bar{\zeta})-f(0)}\,:\,(\zeta,s)\in \partial \Delta \Big\}
\\
&
\leq
\sup\Big\{\abs{f(\zeta,\bar{\zeta})-f(0)}\,:\,(\zeta,s)\in M\Big\}.
\end{split}
\end{equation}
As $s\to 0$, $M\ni(\zeta,s) \to 0$.
\end{proof}

The derivatives $F_{z_j}$ and $F_s$ extend smoothly to $H_+ \setminus \{ 0
\}$.
We need them to be smooth through the origin.
First, let us show that their
restrictions to $M$ extend smoothly through the origin.

\begin{claim}\label{claim:DerFSm}
$F_{z_j}|_M,F_s|_M \in C^\infty(M)$, for $1\leq j\leq n$.
\end{claim}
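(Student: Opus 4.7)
The plan is to recover smoothness on $M \setminus \{0\}$ by the vector-field argument of Section \ref{section:extCR}, and then to promote this to smoothness through the CR singularity by matching Taylor expansions, using the formal power series from Proposition \ref{prop:FormalPowerSeries} together with estimates on attached affine discs.

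First, at any point $p \in M \setminus \{0\} = M_{CR}$, the defining function $\rho$ of $M$ satisfies $\rho_{\bar z_j}(p) \neq 0$ for some $j$; assume $j=1$. On a neighborhood of $p$ in $M$, the vector fields
\begin{equation*}
X_k = \frac{\partial}{\partial z_k} - \frac{\rho_{z_k}}{\rho_{\bar z_1}} \frac{\partial}{\partial \bar z_1}, \qquad Y = \frac{\partial}{\partial s} - \frac{\rho_s}{\rho_{\bar z_1}} \frac{\partial}{\partial \bar z_1}
\end{equation*}
are smooth, and the arguments of Section \ref{section:extCR} apply verbatim: since $F$ is holomorphic along each leaf and smooth up to $M$ there, $F_{z_k}|_M = X_k f$, while a commutator computation with a CR vector field shows $Y f$ is smooth and CR on $M_{CR}$ and equals $F_s|_M$. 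Hence $F_{z_j}|_M, F_s|_M \in C^{\infty}(M \setminus \{0\})$.

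Second, I would invoke Proposition \ref{prop:FormalPowerSeries} to obtain a formal power series $\hat F(z,s)$ with $\hat F\bigl(z, \rho(z,\bar z)\bigr) = f(z,\bar z)$ formally at the origin, and use Borel's theorem to realize $\hat F$ as a genuine smooth function $\tilde F(z,s)$ in a neighborhood of the origin in $\C^n \times \R$. Then $f - \tilde F|_M$ vanishes to infinite order at the origin on $M$. Put $G := F - \tilde F$; this is smooth on $H_+ \setminus \{0\}$, continuous on $H_+$, and $G|_M$ is flat at the origin. By Lemma \ref{lemma:affinedisks}, every $(z,s) \in (H_+ \setminus M)$ near the origin lies on an affine analytic disc $\Delta$ with $\partial \Delta \subset M_{CR}$ of size comparable to $\abs{z} + \abs{s}$. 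The maximum principle yields
\begin{equation*}
\abs{G(z,s)} \le \sup_{\partial \Delta} \abs{(f - \tilde F|_M)} = O\bigl((\abs{z} + \abs{s})^N\bigr)
\end{equation*}
for every $N$, and the Cauchy integral formula on the same discs (as in Lemma \ref{lemma:intreg}) propagates analogous $N$-th order vanishing to every derivative $D^{\alpha} G$ on $H_+ \setminus \{0\}$, at the cost of enlarging $N$ by $\abs{\alpha}$.

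Combining the ingredients, on $M$ the functions $F_{z_j}|_M$ and $F_s|_M$ are smooth off the origin, and by the flatness of $G$ their values are approximated to arbitrary finite order by the formal derivatives $\hat F_{z_j}$ and $\hat F_s$ pulled back through $z \mapsto \bigl(z, \rho(z,\bar z)\bigr)$. A standard Whitney/Malgrange-type argument then upgrades pointwise formal Taylor data plus smoothness off the origin to $C^{\infty}$ regularity through the origin, establishing $F_{z_j}|_M, F_s|_M \in C^{\infty}(M)$. The main obstacle is precisely that the vector fields $X_k$ and $Y$ degenerate at the CR singularity because $\rho_{\bar z_1}$ vanishes there; bypassing this degeneration by transferring the question to the flat function $G$ and exploiting the attached discs of Section \ref{section:affinedisks} is the key maneuver.
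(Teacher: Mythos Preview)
There is a genuine gap in the second step. The function $\tilde{F}$ produced by Borel's theorem is smooth but not holomorphic in $z$, so $G = F - \tilde{F}$ is not holomorphic along the leaves $\C^n \times \{s\}$. Neither the maximum principle nor the Cauchy integral formula applies to $G$ on the affine discs, and the displayed bound $\abs{G(z,s)} \le \sup_{\partial\Delta}\abs{f - \tilde{F}|_M}$ is unjustified as written. The simplest repair is to replace $\tilde{F}$ by the weighted-degree-$N$ Taylor polynomial $P_N(z,s)$ of $\hat{F}$: since $P_N$ is a polynomial in $(z,s)$ it \emph{is} holomorphic along leaves, $F - P_N$ is the extension of the smooth CR function $f - P_N\bigl(z,\rho(z,\bar{z})\bigr)$, and the latter is $O(\snorm{z}^{N+1})$ at the origin by Proposition~\ref{prop:FormalPowerSeries}. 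The maximum principle on the discs of Lemma~\ref{lemma:affinedisks} then yields the desired decay, and differentiating the Cauchy representation of Lemma~\ref{lemma:intreg} in $s$ (which moves the contour and brings in tangential derivatives of $f - P_N|_M$, themselves $O(\snorm{z}^{N})$) handles the $s$-derivative; this last step needs more detail than you give. Letting $N \to \infty$ and restricting to $M$ then gives the flatness needed to conclude.

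For contrast, the paper's argument is algebraic rather than analytic and does not use the discs at this stage. From the identity $f_{\bar{z}_j} = (F_s|_M)\,\xi_j$ with $\xi_j = \rho_{\bar{z}_j}$, and the fact that $(\xi,\bar{\xi})$ is a smooth coordinate system at the origin because $Q$ is nondegenerate, the paper applies the Malgrange--Mather division theorem together with Malgrange's pointwise-to-global ideal-membership theorem to show that $f_{\bar{z}_j}$ is genuinely divisible by $\xi_j$ in $C^\infty$; the quotient is $F_s|_M$, and then $F_{z_j}|_M = f_{z_j} - (F_s|_M)\bar{\xi}_j$ follows immediately. Your route, once the holomorphicity issue is fixed, is an independent analytic argument that avoids the division machinery at the cost of tracking decay rates through the Cauchy integral; the paper's route is shorter and makes the role of the nondegeneracy of $Q$ transparent.
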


\begin{proof}
Let $s = \rho(z,\bar{z})$ define $M$, then write
\begin{equation}
\xi_j = \rho_{\bar{z}_j} =
\epsilon_j z_j + v_j \cdot \bar{z} + h.o.t.,
\end{equation}
for $\epsilon_j = -1,0,1$ and
some constant vector $v_j$, where $\cdot$ denotes dot product.
Similarly,
$\bar{\xi}_j = \rho_{z_j} =
\epsilon_j \bar{z}_j + \bar{v}_j \cdot z + h.o.t.$
Because $Q$ is nondegenerate we find that the $\xi_j$ and $\bar{\xi}_j$
are linearly independent and therefore $(\xi,\bar{\xi})$ gives
a smooth change of variables at
the origin.

We take derivatives outside the origin:
\begin{equation} \label{eq:fzbarjdivxij}
f_{\bar{z}_j} = F_s|_M \xi_j .
\end{equation}
The function $f_{\bar{z}_j}$ is smooth through the origin, so
the right hand side is smooth as well.  We need to show that
$f_{\bar{z}_j}$ is divisible by $\xi_j$ to show that $F_s|_M$ is
smooth.

A formal solution to the extension problem exists, so
the division is also true formally.  Thus for any order $m$, we write $f$ as
\begin{equation}
f(z,\bar{z}) = P\bigl(z,\rho(z,\bar{z})\bigr) + R(z,\bar{z}) ,
\end{equation}
where $P(z,s)$ is a polynomial of degree $m$
and $R$ is $O(m+1)$. So,
\begin{equation}
f_{\bar{z}_j} = P_s(z,\rho) \xi_j + R_{\bar{z}_j}(z,\bar{z}) .
\end{equation}
Hence, the lower order terms are not an obstruction to the division
of $f_{\bar{z}_j}$ by
$\xi_j$.

Because the
variables $\xi,\bar{\xi}$ are a smooth change of variables, we now
think of everything in terms of $\xi$ and $\bar{\xi}$.

Consider the real part of $\xi_j$ as a variable,
and apply the Malgrange-Mather
division theorem~\cite{Malgrange}*{Chapter V}.
One obtains smooth functions $q$ and
$r$, where
\begin{equation} \label{eq:fjdiv}
f_{\bar{z}_j} = q \xi_j + r ,
\end{equation}
and $r$ does not depend on the real part of $\xi_j$.
If $r$ had any finite order terms, the lowest order part of $r$ would be
divisible by $\xi_j$ as we saw above and would thus depend on the real part
of $\xi_j$.  Hence $r$ does not have any finite order terms.

Consider the ideal $I$ generated by $\xi_j$.
Outside the origin,
$f_{\bar{z}_j}$ is divisible by $\xi_j$ using \eqref{eq:fzbarjdivxij}.
Thus, $f_{\bar{z}_j}$ is
locally in the ideal at every point outside the origin.
At the origin, $r$ vanishes to infinite order in \eqref{eq:fjdiv},
and so $f_{\bar{z}_j}$ is in $I$ formally, as its Taylor series is a
Taylor series of a function that is in the ideal, namely $q\xi_j$.
As the Taylor series of $f_{\bar{z}_j}$ at each point belongs formally to
$I$, then
a theorem of Malgrange \cite{Malgrange}*{Theorem 1.1' in Chapter VI},
implies $f_{\bar{z}_j} \in I$.
So $f_{\bar{z}_j}$ is divisible by $\xi_j$, and hence $F_s|_M$
extends smoothly through the origin.

Write (on $M$)
\begin{equation}
f_{z_j} = F_{z_j}|_M + F_s|_M \bar{\xi}_j .
\end{equation}
As $F_s|_M$ is smooth and $f_{z_j}$ is smooth, then $F_{z_j}|_M$ is smooth
through the origin.
\end{proof}

The proof of Theorem~\ref{thm:extCRsing} now follows from the next claim.

\begin{claim} \label{claim:localsmext}
$F \in C^\infty(H_+)$.
\end{claim}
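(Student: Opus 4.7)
The plan is to prove, by induction on $|\alpha|$, that every partial derivative $G := D^\alpha F = \partial_z^\beta \partial_s^\gamma F$ in the $(z,s)$-variables extends continuously from $H_+\setminus\{0\}$ to $H_+$ and satisfies $G|_M\in C^\infty(M)$. Since $F$ is CR on $H_+\setminus M$, any derivative involving $\partial_{\bar z_j}$ is identically zero on $H_+\setminus M$, so these are the only derivatives to track. Once the induction is complete, a standard argument---$F$ is smooth on the interior $H_+\setminus\{0\}$, every $D^\alpha F$ extends continuously up to $H_+$, and by the fundamental theorem of calculus along paths in $H_+$ approaching $0$ these continuous extensions are the genuine partial derivatives of $F$ at the origin---will deliver $F\in C^\infty(H_+)$ in the manifold-with-boundary sense.

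The base case $|\alpha|=0$ is precisely Claims~\ref{claim:ExtnConts} and~\ref{claim:DerFSm}. For the inductive step I would set $G = D^\alpha F$ and verify that the pair $(G|_M, G)$ inherits the properties of $(f, F)$ that were needed in those two claims. Commutation of $\partial_{\bar z_j}$ with $D^\alpha$ ensures that $G$ is holomorphic on each leaf of $H_+\setminus M$. A chain-rule computation from $G|_M(z,\bar z) = G(z, \rho(z,\bar z))$ gives $\partial_{\bar z_j} G|_M = G_s \, \rho_{\bar z_j}$, and any tangent CR vector field $X = \sum b_j \partial_{\bar z_j}$ satisfies the tangency relation $\sum b_j \rho_{\bar z_j} = 0$, forcing $X G|_M = 0$; thus $G|_M$ is a smooth CR function on $M$. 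Uniqueness of the continuous CR extension along each leaf (via the maximum modulus principle) identifies $G$ with the Lewy extension of $G|_M$ that Lemma~\ref{lemma:extallbutzero} would produce.

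The maximum-principle estimate of Claim~\ref{claim:ExtnConts} then applies verbatim to $(G|_M, G)$ and shows $G\in C(H_+)$. For the boundary-smoothness step I would invoke Claim~\ref{claim:DerFSm} with $(G|_M, G)$ in place of $(f, F)$: its Malgrange-division proof uses only the identity $G|_M(z,\bar z) = G(z,\rho(z,\bar z))$, smoothness of $G|_M$, and the existence of a formal power series for $G|_M$ at the origin. This last ingredient is supplied by formally applying $\partial_z^\beta \partial_s^\gamma$ to the formal extension of $f$ furnished by Proposition~\ref{prop:FormalPowerSeries}. The conclusions $G_{z_j}|_M, G_s|_M \in C^\infty(M)$ and $G_{z_j}, G_s \in C(H_+)$ close the induction.

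The main obstacle is exactly this iteration of Claim~\ref{claim:DerFSm}: its Malgrange-division argument requires a formal extension at each stage, and one must check that this hypothesis propagates correctly under repeated differentiation. This works precisely because Proposition~\ref{prop:FormalPowerSeries} produces a formal power series in the ambient variables $(z,s)$ rather than an object living only on $M$, so term-by-term differentiation produces a bona fide formal series for every $D^\alpha F|_M$. Apart from this bookkeeping, the remaining ingredients---uniqueness of CR extensions along leaves, commutation of $\partial_{\bar z_j}$ with coordinate derivatives, and the reduction of smoothness near a boundary corner to continuity of all derivatives up to the boundary---are standard.
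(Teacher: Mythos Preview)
Your proposal is correct and follows the same bootstrapping argument as the paper: once $F_{z_j}|_M$ and $F_s|_M$ are known to be smooth CR functions on $M$ (Claim~\ref{claim:DerFSm}), they satisfy the same hypotheses as $f$, so Claims~\ref{claim:ExtnConts} and~\ref{claim:DerFSm} apply to them verbatim, and iterating gives $F\in C^\infty(H_+)$. The ``main obstacle'' you flag is not really one: since each $G|_M$ is itself a smooth CR function on $M$, Proposition~\ref{prop:FormalPowerSeries} applies to it directly and furnishes the formal series needed in the Malgrange step, so you need not produce it by differentiating the formal series for $f$.
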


\begin{proof}
By Theorem~\ref{thm:extCR},
 $F\in C^\infty(H_+ \setminus \{ 0 \})$.
Let us first show $F \in C^{1}(H_+)$.
Above we saw that $F \in C(H_+)$ and the derivatives of
$F_{z_j}$ and $F_s$ restricted to $M$ are smooth functions.
They are also CR functions.
Then $F_{z_j}$ and $F_s$ satisfy the hypothesis of the theorem, and
so we apply what we know so far, and we find that $F_{z_j}$ and $F_s$
are continuous at the origin.  Hence $F \in C^1(H_+)$.
By iterating this procedure we find that $F \in C^\infty(H_+)$.
\end{proof}

\section{Further Examples} \label{section:furtherexamples}

\begin{example}
\emph{Extension can be one sided at every point.}

Let $M \subset \C^2 \times \R$ be given by
\begin{equation}
s = \abs{z_1}^2 + \abs{z_2}^2 = \snorm{z}^2 .
\end{equation}
Let $g \colon S^3 \subset \C^2 \to \C$ be a smooth CR function defined on the
sphere that does not extend holomorphically to the outside of the unit ball
through any
point of $S^3$; see \cite{Catlin} or \cite{Hakim-Sibony}.  Define
\begin{equation}
f(z,s) =
\begin{cases}
e^{-1/s^2} g\bigl(\frac{z}{\sqrt{s}}\bigr) & \text{if $s < 0$,}\\
0 & \text{if $s = 0$.}
\end{cases}
\end{equation}
Again it is easy to see that $f$ is CR.  Furthermore, on $M$ outside the
origin we find
\begin{equation}
f(z,\snorm{z}^2) =
e^{-1/\snorm{z}^4} g\left(\frac{z}{\snorm{z}}\right).
\end{equation}
Clearly $f$ is smooth outside the origin.
Since $S^3$ is compact all derivatives of $g$ are bounded.  Taking
derivatives of $f$, we see that all derivatives of $f$ are bounded near
the origin, and so $f$ is smooth at the origin as well.
The function extends to be smooth and CR on $H_+$ given by $s \geq \snorm{z}^2$.
However, for every $p \in M$, there is no neighborhood $U$ of $p$
such that $f$ extends to be CR on $U$.
\end{example}

\begin{example}
\emph{Extension fails in $n=1$.}

We already saw that the extension fails in $n=1$.  However, let us give
a further example in the smooth case.
Suppose $M \subset \C \times \R$ is a nonparabolic
Bishop surface given by
\begin{equation}
s = \abs{z}^2 + \lambda z^2 + \lambda \bar{z}^2 ,
\qquad
\text{(where $0 \leq \lambda < \infty$ and $\lambda \not= \tfrac{1}{2}$)}.
\end{equation}
This $M$ has a nondegenerate isolated CR singularity.  Define a smooth $f \colon \C \to \R$ that
is zero on the first quadrant of $\C$ and positive elsewhere.
Parametrizing $M$ by $z$, we have that $f(z,\bar{z})$ is a smooth function on $M$.
As the CR condition is vacuous, it can be considered a CR function.
For every $s\not= 0$, the leaf
\begin{equation}
(H_+)_{\{s\}}  = \{ z \in \C \mid s \geq \abs{z}^2 + \lambda z^2 + \lambda
\bar{z}^2 \}
\end{equation}
is either empty, or
has part of its boundary in the first quadrant.  Clearly 
the function $f$ cannot extend to this leaf holomorphically as it is not
identically zero, but it is zero on a nontrivial arc of the boundary.
\end{example}

The next two examples show that
the existence of the extension depends on the topology of the leaves $(H_+)_{\{s\}}$.

\begin{example}
\emph{Without two positive eigenvalues, leaves may have disconnected boundary.}

The submanifold $M$ given by
\begin{equation}
s=\abs{z_1}^2-\abs{z_2}^2+\lambda\left(z_1^2+\bar{z}_1^2\right)
\end{equation}
for $\lambda>1/2$,
has an isolated CR singularity.
Both $Q$ and $A$ are nondegenerate,
and $A$ has only one positive and one negative eigenvalue.
The leaves $(H_+)_{\{s\}}$ for $s > 0$ have a disconnected boundary with two components.
Construct a smooth $f$ that is a different constant along each boundary component of each leaf.
Such an $f$ is CR, but does not extend to $H_+$.
\end{example}

\begin{example}
\emph{Higher order terms may complicate the topology of the leaves.}

The submanifold $M$ defined by
\begin{equation}
s=\sin\left(\norm{z}^{-2}\right)e^{-\norm{z}^{-2}}
\end{equation}
is degenerate, and the CR singularity consists of the origin and concentric circles.

Let $f \colon M \to \C$ be $f(z,s) = \norm{z}^2$.  This $f$ is smooth and CR on $M$.
The set $(H_+)_{\{s\}}$ has generally several components and a disconnected boundary.  The function $f$ is equal to a different constant on each component of the boundary of any $(H_+)_{\{s\}}$, and therefore no extension exists to $H_+$.
\end{example}


\def\MR#1{\relax\ifhmode\unskip\spacefactor3000 \space\fi%
  \href{http://www.ams.org/mathscinet-getitem?mr=#1}{MR#1}}

\begin{bibdiv}
\begin{biblist}

\bib{Bishop65}{article}{
   author={Bishop, Errett},
   title={Differentiable manifolds in complex Euclidean space},
   journal={Duke Math.\ J.},
   volume={32},
   date={1965},
   pages={1--21},
   issn={0012-7094},
   review={\MR{0200476}},
}

\bib{Burcea}{article}{
  author={Burcea, Valentin},
  title={A normal form for a real 2-codimensional submanifold in
         $\mathbb{C}^{N+1}$ near a CR singularity},
  journal={Adv.\ Math.},
  volume={243},
  year={2013},
  pages={262--295},
  review={\MR{3062747}},
}

\bib{Burcea2}{article}{
   author={Burcea, Valentin},
   title={On a family of analytic disks attached to a real submanifold
   $M\subset{\mathbb C}^{N+1}$},
   journal={Methods Appl.\ Anal.},
   volume={20},
   date={2013},
   number={1},
   pages={69--78},
   issn={1073-2772},
   review={\MR{3085782}},
}

\bib{Catlin}{article}{
   author={Catlin, David},
   title={Boundary behavior of holomorphic functions on pseudoconvex domains},
   journal={J.\ Differential Geom.},
   volume={15},
   date={1980},
   number={4},
   pages={605--625 (1981)},
   issn={0022-040X},
   review={\MR{0628348}},
}

\bib{Coffman}{article}{
   author={Coffman, Adam},
   title={CR singularities of real fourfolds in $\mathbb C^3$},
   journal={Illinois J. Math.},
   volume={53},
   date={2009},
   number={3},
   pages={939--981 (2010)},
   issn={0019-2082},
   review={\MR{2727363}},
}

\bib{DTZ}{article}{
   author={Dolbeault, Pierre},
   author={Tomassini, Giuseppe},
   author={Zaitsev, Dmitri},
   title={On boundaries of Levi-flat hypersurfaces in ${\mathbb C}^n$},
   language={English, with English and French summaries},
   journal={C.\ R.\ Math.\ Acad.\ Sci.\ Paris},
   volume={341},
   date={2005},
   number={6},
   pages={343--348},
   issn={1631-073X},
   review={\MR{2169149}},
}

\bib{DTZ2}{article}{
   author={Dolbeault, Pierre},
   author={Tomassini, Giuseppe},
   author={Zaitsev, Dmitri},
   title={Boundary problem for Levi flat graphs},
   journal={Indiana Univ.\ Math.\ J.},
   volume={60},
   date={2011},
   number={1},
   pages={161--170},
   issn={0022-2518},
   review={\MR{2952414}},
} 

\bib{HuangFang}{unpublished}{
   author={Fang, Hanlong},
   author={Huang, Xiaojun},
   title={Flattening a non-degenerate CR singular point of real codimension two},
  note={preprint, see \href{http://arxiv.org/abs/1703.09135}{arXiv:1703.09135}}
}

\bib{Gong94:duke}{article}{
   author={Gong, Xianghong},
   title={Normal forms of real surfaces under unimodular transformations
   near elliptic complex tangents},
   journal={Duke Math.\ J.},
   volume={74},
   date={1994},
   number={1},
   pages={145--157},
   issn={0012-7094},
   review={\MR{1271467}},
}

\bib{GongLebl}{article}{
   author={Gong, Xianghong},
   author={Lebl, Ji\v{r}\'\i},
   title={Normal forms for CR singular codimension-two Levi-flat submanifolds},
   journal={Pacific J.\ Math.},
   volume={275},
   date={2015},
   number={1},
   pages={115--165},
}

\bib{Hakim-Sibony}{article}{
   author={Hakim, Monique},
   author={Sibony, Nessim},
   title={Spectre de $A(\overline{\Omega})$ pour des domaines born\'{e}s faiblement pseudoconvexes r\'{e}guliers},
   journal={J.\ Funct.\ Anal.},
   volume={37},
   date={1980},
   number={2},
   pages={127--135},
   issn={0022-1236},
   review={\MR{0578928}},
}

\bib{Harris}{article}{
   author={Harris, Gary Alvin},
   title={The traces of holomorphic functions on real submanifolds},
   journal={Trans.\ Amer.\ Math.\ Soc.},
   volume={242},
   date={1978},
   pages={205--223},
   issn={0002-9947},
   review={\MR{0477120}},
}

\bib{HillTaiani}{article}{
   author={Hill, C. D.},
   author={Taiani, G.},
   title={On the Hans Lewy extension phenomenon in higher codimension},
   journal={Proc. Amer. Math. Soc.},
   volume={91},
   date={1984},
   number={4},
   pages={568--572},
   issn={0002-9939},
   review={\MR{746091}},
}

\bib{HornJohnson}{book}{
   author={Horn, Roger A.},
   author={Johnson, Charles R.},
   title={Matrix analysis},
   edition={2},
   publisher={Cambridge University Press, Cambridge},
   date={2013},
   pages={xviii+643},
   isbn={978-0-521-54823-6},
   review={\MR{2978290}},
}

\bib{HuangKrantz95}{article}{
   author={Huang, Xiaojun},
   author={Krantz, Steven G.},
   title={On a problem of Moser},
   journal={Duke Math.\ J.},
   volume={78},
   date={1995},
   number={1},
   pages={213--228},
   issn={0012-7094},
   review={\MR{1328757}},
}

\bib{HuangYin09}{article}{
   author={Huang, Xiaojun},
   author={Yin, Wanke},
   title={A Bishop surface with a vanishing Bishop invariant},
   journal={Invent.\ Math.},
   volume={176},
   date={2009},
   number={3},
   pages={461--520},
   issn={0020-9910},
   review={\MR{2501295}},
}

\bib{HuangYin09:codim2}{article}{
   author={Huang, Xiaojun},
   author={Yin, Wanke},
   title={A codimension two CR singular submanifold that is formally
   equivalent to a symmetric quadric},
   journal={Int.\ Math.\ Res.\ Not.\ IMRN},
   date={2009},
   number={15},
   pages={2789--2828},
   issn={1073-7928},
   review={\MR{2525841}},
}

\bib{HuangYin:flattening1}{article}{
   author={Huang, Xiaojun},
   author={Yin, Wanke},
   title={Flattening of CR singular points and analyticity of the local hull
   of holomorphy I},
   journal={Math. Ann.},
   volume={365},
   date={2016},
   number={1-2},
   pages={381--399},
   issn={0025-5831},
   review={\MR{3498915}},
}

\bib{HuangYin:flattening2}{article}{
   author={Huang, Xiaojun},
   author={Yin, Wanke},
   title={Flattening of CR singular points and analyticity of the local hull
   of holomorphy II},
   journal={Adv. Math.},
   volume={308},
   date={2017},
   pages={1009--1073},
   issn={0001-8708},
   review={\MR{3600082}},
}

\bib{KenigWebster:82}{article}{
   author={Kenig, Carlos E.},
   author={Webster, Sidney M.},
   title={The local hull of holomorphy of a surface in the space of two
   complex variables},
   journal={Invent.\ Math.},
   volume={67},
   date={1982},
   number={1},
   pages={1--21},
   issn={0020-9910},
   review={\MR{664323}},
}

\bib{LMSSZ}{article}{
   author={Lebl, Ji{\v{r}}{\'{\i}}},
   author={Minor, Andr{\'e}},
   author={Shroff, Ravi},
   author={Son, Duong},
   author={Zhang, Yuan},
   title={CR singular images of generic submanifolds under holomorphic maps},
   journal={Ark.\ Mat.},
   volume={52},
   date={2014},
   number={2},
   pages={301--327},
   issn={0004-2080},
   review={\MR{3255142}},
}

\bib{crext}{article}{
   author={Lebl, Ji{\v{r}}{\'{\i}}},
   author={Noell, Alan},
   author={Ravisankar, Sivaguru},
   title={Extension of CR functions from boundaries in ${\bf C}^n\times{\bf R}$},
   journal={Indiana Univ.\ Math.\ J.},
   volume={},
   date={},
   number={},
   pages={to appear},
   issn={},
   review={},
   note={\href{http://arxiv.org/abs/1505.05255}{arXiv:1505.05255}}
}

\bib{crext2}{article}{
   author={Lebl, Ji{\v{r}}{\'{\i}}},
   author={Noell, Alan},
   author={Ravisankar, Sivaguru},
   title={Codimension two CR singular submanifolds and extensions of CR functions},
   journal = {J.\ Geom.\ Anal.},
   volume={},
   date={2017},
   number={},
   pages={},
   issn={},
   review={},
   note={\href{http://dx.doi.org/10.1007/s12220-017-9767-6}{DOI:10.1007/s12220-017-9767-6},
   \href{http://arxiv.org/abs/1604.02073}{arXiv:1604.02073}}
}

\bib{Malgrange}{book}{
   author={Malgrange, B.},
   title={Ideals of differentiable functions},
   series={Tata Institute of Fundamental Research Studies in Mathematics,
   No. 3},
   publisher={Tata Institute of Fundamental Research, Bombay; Oxford
   University Press, London},
   date={1967},
   pages={vii+106},
   review={\MR{0212575}},
}

\bib{Moser85}{article}{
   author={Moser, J{\"u}rgen K.},
   title={Analytic surfaces in ${\bf C}^2$ and their local hull of
   holomorphy},
   journal={Ann.\ Acad.\ Sci.\ Fenn.\ Ser.\ A I Math.},
   volume={10},
   date={1985},
   pages={397--410},
   issn={0066-1953},
   review={\MR{802502}},
}

\bib{MoserWebster83}{article}{
   author={Moser, J{\"u}rgen K.},
   author={Webster, Sidney M.},
   title={Normal forms for real surfaces in ${\bf C}^{2}$ near complex
   tangents and hyperbolic surface transformations},
   journal={Acta Math.},
   volume={150},
   date={1983},
   number={3--4},
   pages={255--296},
   issn={0001-5962},
   review={\MR{709143}},
}

\bib{Shabat:book}{book}{
   author={Shabat, B.~V.},
   title={Introduction to complex analysis. Part II},
   series={Translations of Mathematical Monographs},
   volume={110},
   note={Functions of several variables;
   Translated from the third (1985) Russian edition by J. S. Joel},
   publisher={American Mathematical Society},
   place={Providence, RI},
   date={1992},
   pages={x+371},
   isbn={0-8218-4611-6},
   review={\MR{1192135}},
}

\bib{Slapar:16}{article}{
   author={Slapar, Marko},
   title={On complex points of codimension 2 submanifolds},
   journal={J.\ Geom.\ Anal.},
   volume={26},
   date={2016},
   number={1},
   pages={206--219},
   issn={1050-6926},
   review={\MR{3441510}},
}

\bib{Tumanov}{article}{
      author={Tumanov, A.~E.},
       title={Extension of {CR}-functions into a wedge from a manifold of finite type},
        date={1988},
        ISSN={0368-8666},
     journal={Mat.\ Sb.\ (N.S.)},
      volume={136(178)},
      number={1},
       pages={128\ndash 139},
      review={\MR{0945904}},
}

\end{biblist}
\end{bibdiv}


\end{document}